\newtheorem{theorem}{Theorem}[section]
\newtheorem{lemma}[theorem]{Lemma}
\newtheorem{proposition}[theorem]{Proposition}
\newtheorem{corollary}[theorem]{Corollary}
\theoremstyle{definition}
\newtheorem{definition}[theorem]{Definition}
\theoremstyle{conclusion}
\newtheorem*{conclusion}{Conclusion}
\newtheorem{example}[theorem]{Example}
\theoremstyle{remark}
\numberwithin{equation}{section}
\begin{document}
	\title{THEORY OF HYPERSURFACES OF A FINSLER SPACE WITH THE GENERALIZED SQUARE METRIC}
	\author[$\dagger$]{Sonia Rani}
	\author[$\ddagger$]{Vinod Kumar}
	\author[$\S$]{Mohammad Rafee \thanks{Corresponding authour: Mohammad Rafee; Email Id: mohd\textunderscore rafee60@yahoo.com}}
	
	\affil[$\dagger$,$\ddagger$]{Department of Mathematics, School of Applied Sciences, Om Sterling Global University, Hisar, India. }
	\affil[$\S$]{Department of Mathematics, School of Basic, Applied and Bio-Sciences, RIMT University,  India.}
	\date{ }
	\maketitle

\begin{abstract}
	The emergence of generalized square metrics in Finsler geometry can be attributed to various classification  concerning $(\alpha,\beta)$-metrics. They have excellent geometric properties in Finsler geometry. Within the scope of this research paper, we have conducted an investigation into the generalized square metric denoted as $F(x,y)=\frac{[\alpha(x,y)+\beta(x,y)]^{n+1}}{[\alpha(x,y)]^n}$, focusing specifically on its application to the Finslerian hypersurface.  Furthermore, the classification and existence of first, second, and third kind of  hyperplanes of the Finsler manifold has been established.
	
{\bf AMS Subject Classification:}  53B40, 53C60
	
{\it Key Words and Phrases:} Finsler Space, Finsler metric, Generalized square metric, Hypersurface of Finsler space, Hyperplane.
\end{abstract}
\begin{center}
\section{Introduction}
\end{center}

\textcolor{white}{"}Let $M$ be an $n$-dimensional differential manifold. Define a Finsler metric on the  differentiable manifold $M$. This Finsler metric is known as Finsler fundamental fuction on the manifold $M$. Let us first define what is exactly mean by Finsler fundamental function.

\begin{definition}[Finsler metric]
	\justifying
	
	\label{difinition1.0.8}
	We say a function $F:TM\rightarrow R$ is a Finsler metric or Finsler fundamental function on the manifold $M$ if $F$ satisfy the following conditions:\\

	\begin{enumerate}
		\raggedright
		\item $F$ is $C^\infty$ away from zero vectors of the tangent spaces:\\
		That is, $F$ is smooth on $TM\backslash\{0\}=\left\{(x,y)\vert x\in M, y\in T_xM, y\neq0\right\}$. The smoothness property is desired so that we can apply differential calculus on the Finsler metric $F$.
		\item Positivity of function $F$: \\
		$F(x,y)\geq0$ for $x\in M$ and all $y\in  T_xM$.\\
		 This property ensures that the length of a tangent vector $y\in T_xM$ is either positive or zero. In terms of arc length this property also ensures that arc length defined by the integral ~\ref{eq:1.1}  is either positive or zero.
		
		\item Positive homogeneity of function $F$: \\
		$F(x,\lambda y)=\lambda F(x,y)$, $\forall$ $\lambda>0$; for $x\in M$ and all $y\in  T_xM$.\\
		 That is, $F$ is +ve 1-homogeneous of first degree in the directional argument $y$.
		 This property ensures that, length of a tangent vector $\lambda y \in T_xM$ is nothing but $\lambda \times F(x,y)\in R$.
		 
		 \begin{figure}[h!]
		 	\includegraphics[scale=0.35]{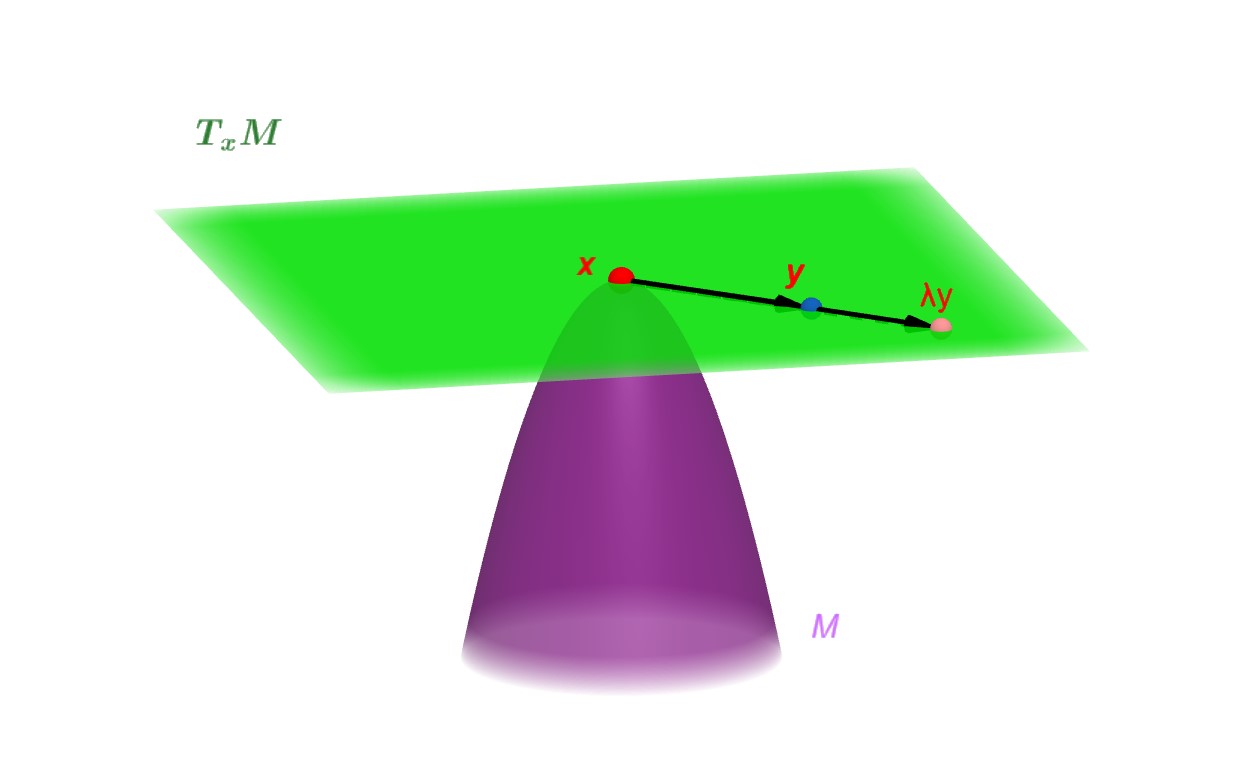}
		 	\caption{This figure shows the tangent vector $\lambda y\in T_xM$, $\lambda>0$, is  $\lambda$ times the tangent vector $y\in T_xM$.}
		 	\label{figure:1}
		 \end{figure}

		\item Strict convexity of the function $F$: \\ $F:TM\rightarrow R$ is strictly convex over the tangent bundle $TM$. 
	\end{enumerate}
It is important to note that strictly convex condition is equivalent to the  hessian matrix $[g_{ij}]$, where $i,j\in \left\{1,2,3,......dim(M)\right\}$, defined by $\frac{1}{2}\frac{\partial^2F^2}{\partial y^i\partial y^j}(x,y)=g_{ij}(x,y)$ is positive definite for any $(x,y)\in{TM}$. It is the convexity condition on the Finsler metric  $F$ that guaranties for  the arc length minimization, given by the following formula, of the admissible curves $\gamma:[a,b]\rightarrow M$ belonging to the set  $C^\infty[a,b]$\\

\begin{equation}
	\label{eq:1.1}
	s[\gamma(t)]=\int_{a}^{b}F(\gamma(t),\dot{\gamma}(t))dt
\end{equation}  
can be achieved. \\

\begin{figure}[h!]
	\includegraphics[scale=0.50]{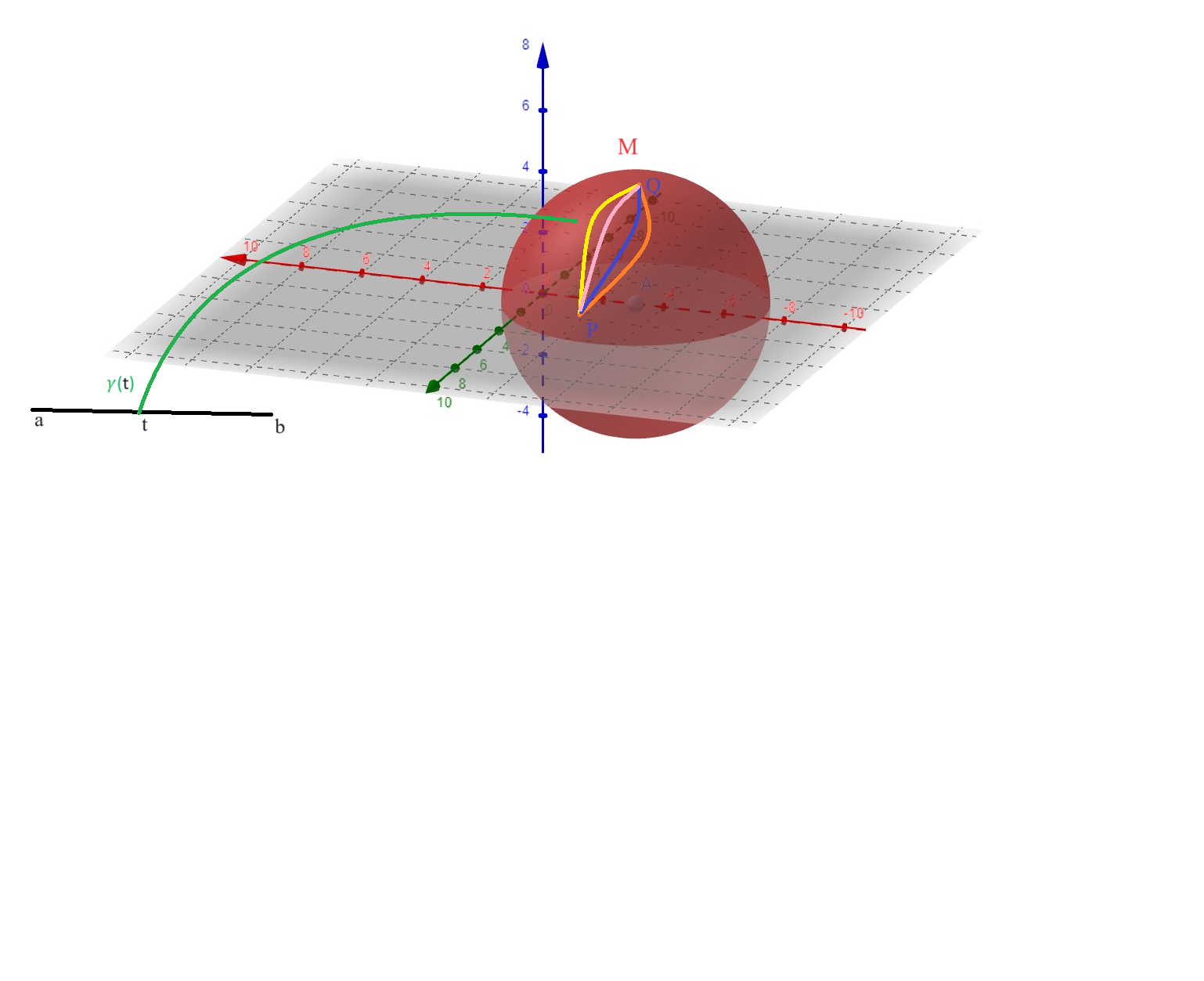}
	\caption{This figure shows the curves $\gamma:[a,b]\rightarrow M$ over the manifold $M$ such that $\gamma(a)=P$ and $\gamma(b)=Q$.}
	\label{figure:2}
\end{figure}

In other words, the convexity condition on the Finsler metric is a geometric requirement that makes sure that the length of a curve can be defined and that the arc length functional  $	s[\gamma(t)]$ is well-behaved, particularly for the purpose of minimizing or finding geodesics  with respect to the given Finsler metric $F$. Without convexity, the concept of length and the corresponding optimization problems will not make sense or may not have unique solutions.

\end{definition}
In the definition provided above, $F(x, y)$ signifies the magnitude of the vector $y$ within the tangent space $T_xM$ originating from an arbitrary point $x$ within the manifold $M$. This quantity is commonly referred to as the "F-length" or simply the "Finslerian length" of the tangent vector $y$ in $T_xM$. If we fix the point $x\in M$ in the manifold $M$, then $F(x,.)$ can eat every tangent vector $y\in T_xM$ and spits a real number.\\
We can think of while considering physical problems involving position and direction,  the Finsler metric $F(x,y)$ as having two arguments $x$ and $y$ representing location and velocity at any point $p\in M$ respectively.\\

\begin{definition}{(Finsler Manifold)}\\
	A differentiable manifold denoted as $M$, when equipped with a Finsler metric  $F(x, y)$, is referred to as a Finsler manifold or Finsler space. This is typically denoted as $(M, F)$.
\end{definition}

\begin{definition}{(Finsler Geometry)}\\
	Consider a Finsler manifold denoted as $(M, F)$, where $F(x, y)$ represents a Finsler metric. The branch of geometry that relies on the Finsler metric $F(x, y)$ defined on the manifold $M$ is termed Finsler geometry.
\end{definition}

In Finsler geometry, a significant and distinct class of Finsler metric known as the $(\alpha, \beta)$-metric is of particular importance. This metric is defined as follows:
 
 \begin{definition}[$(\alpha,\beta)$-metric]
 	
 	Consider a Finsler space denoted as $(M, F(x, y))$, where $F(x, y)$ represents the Finsler fundamental function. This space is said to possess an $(\alpha, \beta)$-metric if the fundamental function $F(x, y)$ can be expressed in the following manner:
 	\begin{align*}
 		F(x,y)=F\left(\alpha(x,y), \beta(x,y)\right)
 	\end{align*}

 	In this expression, $F\left(\alpha(x,y), \beta(x,y)\right)$ is a differentiable function of two variables $\alpha(x, y)$ and $\beta(x, y)$. Here, $\alpha(x, y)$ is the Riemannian fundamental function defined as $\sqrt{a_{ij}(x)y^iy^j}$, where $a_{ij}(x)$ represents a Riemannian metric tensor, and $\beta(x, y)$ is a differential 1-form defined on the tangent bundle $TM$, with $b_i(x)$ representing a covariant vector field.
 	\end{definition}
 
 The class of $(\alpha, \beta)$-metrics was originally introduced by the renowned geometer M. Matsumoto ~\cite{Matsumoto1986}.

 Some important examples of $(\alpha,\beta)$-metric are:

 \begin{example}
 	The  metric defined by 
 	\begin{align*}
 		F(x,y)=\alpha(x,y)+\beta(x,y)
 	\end{align*}
 	is called Randers metric and the space $\left(M,F(x,y)=\alpha(x,y)+\beta(x,y)\right)$ constructed with Randers metric is called Randers space. This metric was first introduced by Physicist G. Randers in 1941 ~\cite{Randers1941}, in his study of general relativity.
 \end{example}

 \begin{example}
 	The metric defined by  
 	\begin{align*}
 		F=\frac{\alpha^2(x,y)}{\beta(x,y)}, \beta>0
 	\end{align*}
 	is called Kropina metric. A space $\left(M,F=\frac{\alpha^2(x,y)}{\beta(x,y)}\right)$ constructed with Kropina metric is called Kropina  space. This metric was introduced by the Russian physicist V.K. Kropina ~\cite{Kropina1961}. It has many important and interesting applications  in physics, electron optics with a magnetic field, dissipative mechanics and irreversible thermodynamics, relativistic field theory, control theory, evolution and developmental biology ~\cite{AntonelliIngardenMatsumoto1993}. 
 	
 \end{example}
 
 \begin{example}
 	The metric defined by  
 	\begin{align*}
 		F=\frac{\alpha^{n+1}(x,y)}{\beta^{n}(x,y)}, \beta>0
 	\end{align*}
 	is called generalized Kropina metric. A space $\left(M,F=\frac{\alpha^{n+1}(x,y)}{\beta^{n}(x,y)}\right)$ constructed with Kropina metric is called generalized  Kropina  space. 
 \end{example}

 \begin{example}
 	
 	The  metric defined by 
 	\begin{align*}
 		F=\frac{\alpha^2(x,y)}{\alpha(x,y)-\beta(x,y)}, \alpha-\beta>0
 	\end{align*}
 	is called Matsumoto metric. A space $\left(M,F=\frac{\alpha^2(x,y)}{\alpha(x,y)-\beta(x,y)}\right)$ constructed with Matsumoto metric is called Matsumoto space. This metric was first introduced by M. Matsumoto ~\cite{Matsumoto1989} while investigating the model of a Finsler space. This metric is also  named as  slope metric.
 \end{example}

 \begin{example}
 	\label{definition1.9}
 	The  metric defined by 
 	\begin{align*}
 		F(x,y)=\frac{[\alpha(x,y)+\beta(x,y)]^2}{\alpha(x,y)}
 	\end{align*}
 	is called square metric. We say the space $\left(M,F(x,y)=\frac{[\alpha(x,y)+\beta(x,y)]^2}{\alpha(x,y)}\right)$ constructed with  square metric  Shen's square  space. 
 \end{example}

 \begin{example}
 	The  metric defined by 
 	\begin{align*}
 		F(x,y)=\frac{[\alpha(x,y)+\beta(x,y)]^{n+1}}{[\alpha(x,y)]^n}
 	\end{align*}
 	is called generalized square metric. We say the space $\left(M,F(x,y)=\frac{[\alpha(x,y)+\beta(x,y)]^{p+1}}{[\alpha(x,y)]^p}\right)$ constructed with generalized square metric the1.generalized square  space or the Shen's generalized square  space. If we put $n=1$ in above metric, i.e., in generalized square metric, we get the  square metric of definition ~\eqref{definition1.9}.   
 \end{example}

 Among these Finsler metrics  this was the \enquote{generalized square metric}   that draws our attention to work with hypersurface  of a Finsler space $(M,F(x,y))$.\\
 Let us first define the meaning of a hypersurface:
 \begin{definition}
 		A hypersurface is an embedded submanifold of codimention 1. That is, a submanifold of dimension less that 1 of a given manifold  is called hypersuraface of the underlying manifold.
 \end{definition} 
This means, if we have a manifold $M$ such that dimension of $M$ is $n$, then the submanifold having dimension $n-1$ will be understood as hypersurface of the given manifold.\\
A submanifold of dimension $n-1$ is generally denoted by the symbol $M^{n-1}$. In this paper, we will also denote a hupersurafce of a manifold $M$ by the symbol $M^{n-1}$.

\begin{example}
	Let $M=R^2$ be a manifold. Then a straight line in Figure  \ref{figure:3}    denoted by $M^{n-1}$  and defined by linear equation in two variables $x$ and $y$, i.e.,  $ax+by+c=0$ is a hypersuarface of the underlying manifold $M=R^2$, because dim($M$)=dim($R^2$)=2 while the   dim($M^{n-1}$)=$n-1=2-1=1$.
\end{example}

\begin{figure}[h!]
	\includegraphics[scale=0.5]{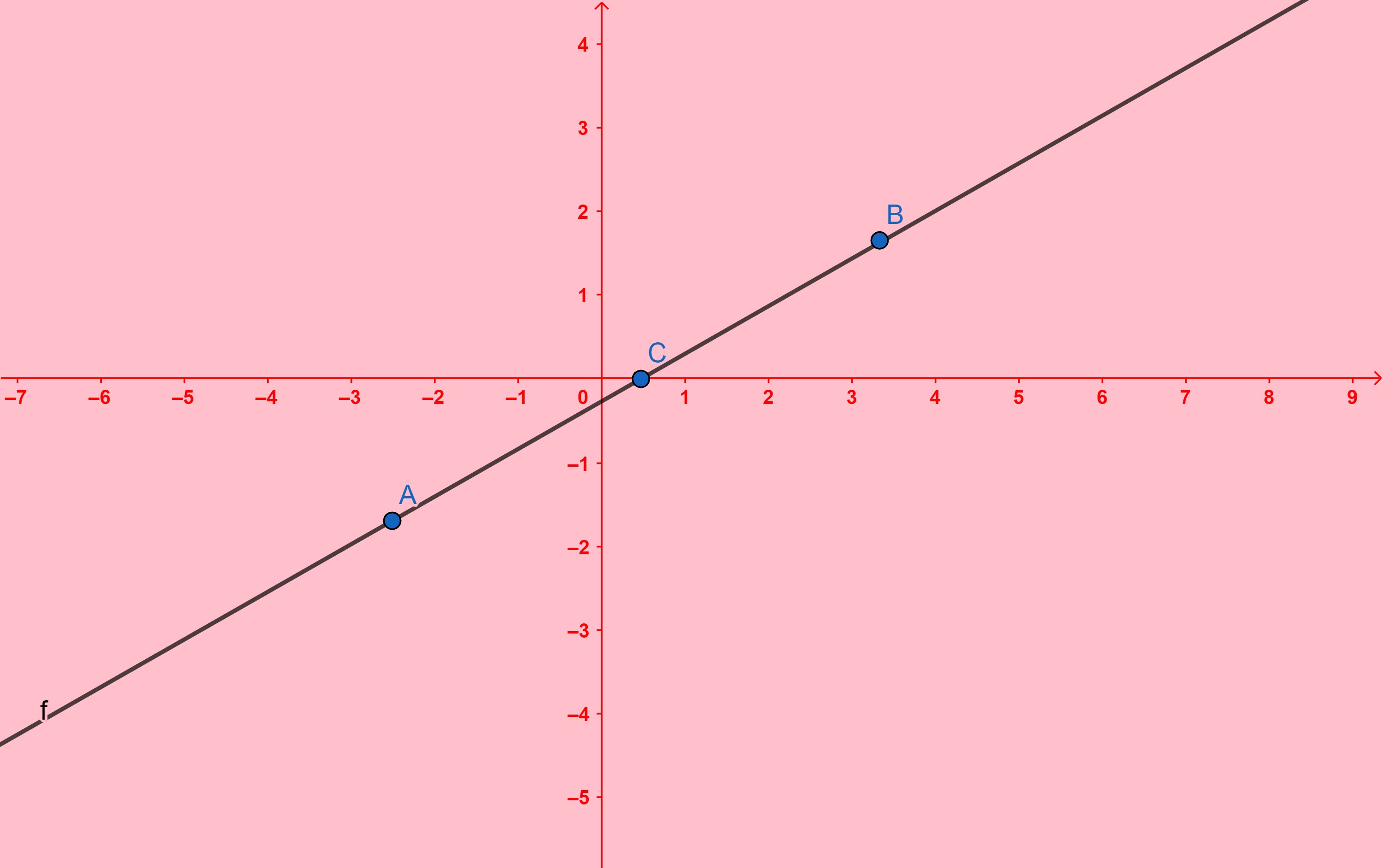}
	\caption{The line represented by eqution ax+by+c=0 is a hypersurface of the 2-dimensional manifold $R^2$.}
	\label{figure:3}
\end{figure}

\begin{example}
	Let $M=R^3$ be a manifold. Then the unit sphere in Figure \ref{figure:4} denoted by $M^{n-1}$  and defined by $x^2+y^2+z^2=1$ is a hypersuarface of the underlying manifold $M=R^3$, because dim($M$)=dim($R^3$)=3 while the   dim($M^{n-1}$)=$n-1=3-1=2$.
\end{example}

\begin{figure}[h!]
	\includegraphics[scale=0.40]{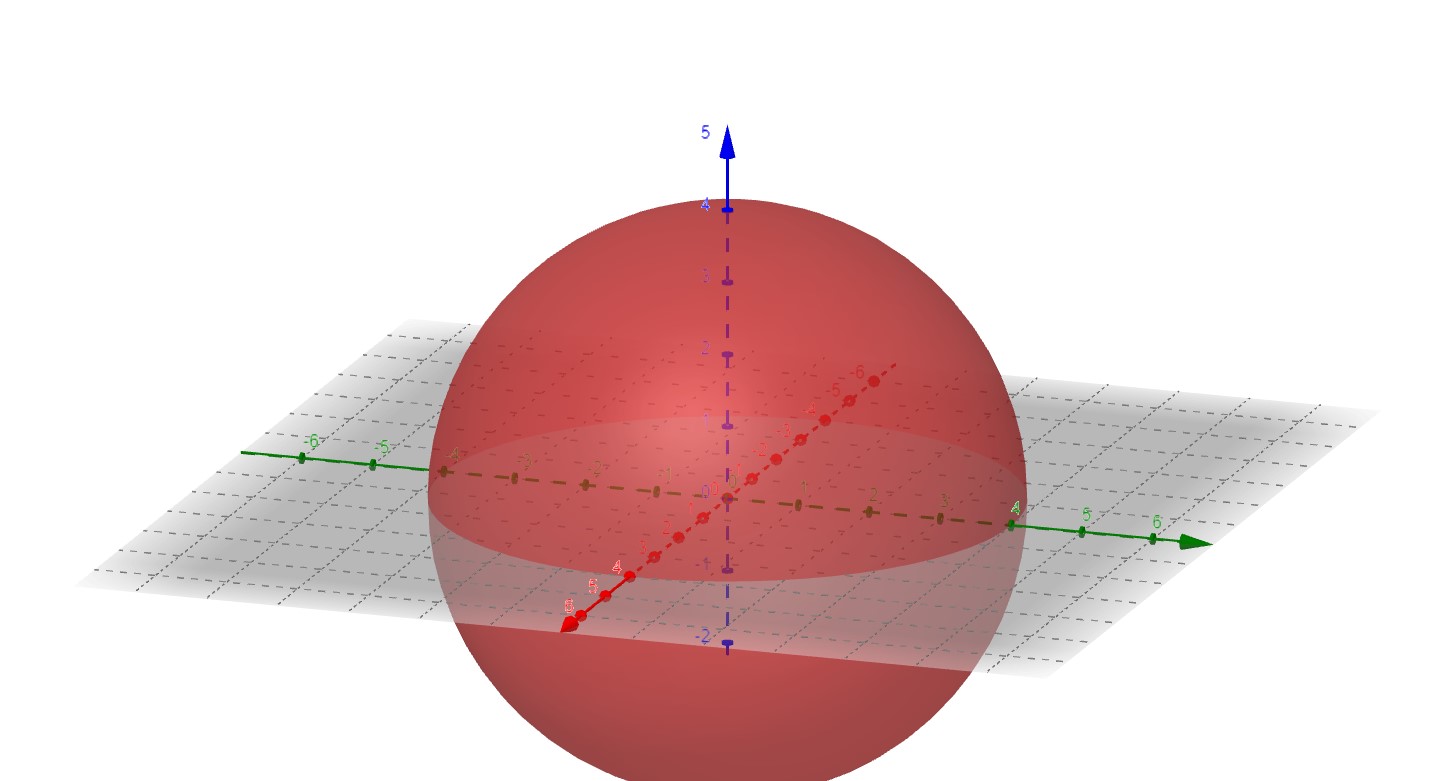}
	\caption{The sphere represented by equation $x^2+y^2+z^2=16$ is a hypersurface of the 3-dimensional manifold $R^3$.}
	\label{figure:4}
\end{figure}

 Matsumoto ~\cite{Matsumoto1985}, a prominent Finslerian, was the first person who studies the hypersurfaces and charecterised the special hypersurfaces $M^{n-1}$ of a Finsler manifold. He, specifically,  characterised the  properties of  hypersurface $M^{n-1}$ of Randers space ~\cite{Randers1941}. After this, the number of Finslerians dramatically increased to show their interest in  Finsler hypersurface $M^{n-1}$. Many authors around the world 
 (~\cite{Kitayama2002},~\cite{Leelee2001}, ~\cite{Shankerchaubeyandey2013},~\cite{Shankerravindra2012},~\cite{Shankersingh2015},~\cite{SinghKumari2001},~\cite{Chaubey2014},~\cite{Panday2018},~\cite{Mishra2017}, ~\cite{Manish2022})  did study the properties of special hypersurface $M^{n-1}$ and derived the conditions under which a Finsler hypersurface $M^{n-1}$ of a Finsler manifold $(M,F(x,y))$ becomes a hyperplane of first kind, second kind but not of the third kind. Aim of the present paper is to investigate the hypersurface $M^{n-1}$ of Finsler space using generalized  square metric  $F(x,y)=\frac{[\alpha(x,y)+\beta(x,y)]^{n+1}}{[\alpha(x,y)]^n}$.

\begin{center}
\hfil \section{Preliminaries}\hfil
\end{center}
\label{sec:2}
We consider the Finsler space $(M,F)$, where $F$ is the generalized square metric, that is  given by
\begin{equation}
\label{eq2.1}
F(\alpha,\beta)=\frac{(\alpha+\beta)^{n+1}}{\alpha^n}
\end{equation}
Calculate all the partial derivatives of equation ~\eqref{eq2.1} up to second order, we get
\begin{align} 
	\label{eq2.2}
F_\alpha&=\frac{(\alpha-n\beta)(\alpha+\beta)^n}{\alpha^{{n+1}}}\\ 
\label{eq2.3}
F_\beta&=\frac{(n+1)(\alpha+\beta)^n}{\alpha^n}\\
\label{eq2.4}
F_{\alpha\alpha}&=\frac{n(n+1)\beta^2(\alpha+\beta)^{n-1}}{\alpha^{n+2}}\\
\label{eq2.5}
F_{\beta\beta}&=\frac{n(n+1)(\alpha+\beta)^{n-1}}{\alpha^n}\\
\label{eq2.6}
F_{\alpha\beta}&=-\frac{n(n+1)\beta(\alpha+\beta)^{n-1}}{\alpha^{n+1}}
\end{align}
We already know that, in a general Finsler manifold $(M,F)$, the normalized element of support $l_i=\frac{\partial F}{\partial y_i}$ and the angular metric tensor $h_{ij}$~\cite{Randers1941} are evaluated by the following formula:
\begin{align}
	\label{eq2.7}
l_i=\frac{F_\alpha y_i}{\alpha}+F_\beta b_i
\end{align}
\begin{align}
\label{eq2.8}
h_{ij}=pa_{ij}+q_0 b_i b_j+q_1(b_iy_j+b_jy_i)+q_2y_i y_j,
\end{align}
and the coefficients are defined and calculated as follows:
\begin{align}
\nonumber
y_i=a_{ij} y^j
\end{align}
 
\begin{align}
\label{eq2.9}
p&=\frac{FF_\alpha}{\alpha}=\frac{(\alpha-n\beta)(\alpha+\beta)^{2n+1}}{\alpha^{2n+2}}\\
\label{eq2.10}
q_0&=FF_{\beta\beta}=\frac{n(n+1)(\alpha+\beta)^{2n}}{\alpha^{2n}}\\
\label{eq2.11}
q_1&=\frac{FF_{\alpha\beta}}{\alpha}=-\frac{n(n+1)\beta(\alpha+\beta)^{2n}}{\alpha^{2n+2}}
\end{align}
\begin{align}
	\nonumber
	q_2&=\frac{F(F_{\alpha\alpha}-\frac{F_\alpha}{\alpha})}{\alpha^2}\\
	\label{eq2.12}
	&=\frac{(\alpha+\beta)^{2n}\left\{n\beta(n+2\beta+n\beta)-\alpha(\alpha+\beta)\right\}}{\alpha^{2n+4}}
\end{align} 

We also know that, in a general Finsler manifold $(M,F)$, the fundamental metric tensor $g_{ij}=\frac{1}{2}\frac{\partial^2 F^2}{\partial y^i\partial y^j}$ is evaluated by ~\cite{Randers1941} the following formula:
\begin{align}
\label{eq2.13}
g_{ij}=pa_{ij}+p_0b_ib_j+p_1(b_iy_j+b_jy_i)+p_2y_iy_j
\end{align}
whereas its coefficients $p,p_0,p_1$ and $p_2$ are defined and calculated as follows: 

\begin{align}
\nonumber
p&=\frac{FF_\alpha}{\alpha}\\
\label{eq2.14}
&=\frac{(\alpha-n\beta)(\alpha+\beta)^{2n+1}}{\alpha^{2n+2}}
\end{align}
\begin{align}
\nonumber
p_0&=q_0+F_\beta^2\\
\label{eq2.15}
&=\frac{(n+1)(2n+1)(\alpha+\beta)^{2n}}{\alpha^{2n}}
\end{align}
\begin{align}
\nonumber
p_1&=q_1+\frac{pF_\beta}{F}\\
\label{eq2.16}
&=\frac{(n+1)(\alpha+\beta)^{2n}(\alpha-2n\beta)}{\alpha^{2n+2}}
\end{align}
\begin{align}
\nonumber
p_2&=q_2+\frac{p^2}{F^2}\\
\label{eq2.17}
&=\frac{\beta(\alpha+\beta)^{2n}\left\{2n^2\beta+2n\beta-n\alpha-\alpha\right\}}{\alpha^2{(n+2)}}
\end{align}

We know that, in a Finsler manifold $(M,F)$, reciprocal metric tensor of a fundamental metric tensor $g_{ij}=\frac{1}{2}\frac{\partial\partial F^2}{\partial y^i\partial y^j}$  is denoted by  $g^{ij}$ and is evaluated by the formula  ~\cite{Randers1941}
\begin{align}
\label{eq2.18}
g^{ij}=\frac{a^{ij}}{p}-S_0b^ib^j-S_1(b^iy^j+b^jy^i)-S_2y^iy^j
\end{align}
whereas its coefficients $b^i$, $S_0$, $S_1$ and $S_2$ are evaluated by the following  formulae:
\begin{align}
\nonumber
b^i&=a^{ij}b_j
\end{align}
\begin{equation}
\label{eq2.19}
S_0=\frac{pp_0+(p_0p_2-p_1^2)\alpha^2}{p\zeta}
\end{equation}
\begin{equation}
\label{eq2.20}
S_1=\frac{pp_1+(p_0p_2-p_1^2)\beta}{p\zeta}
\end{equation}
\begin{equation}
\label{eq2.21}
S_2=\frac{pp_2+(p_0p_2-p_1^2)b^2}{p\zeta}
\end{equation}
\begin{equation}
\label{eq2.22}
 \zeta=p(p+p_0b^2+p_1\beta)+(p_0p_2-p_1^2)(\alpha^2b^2-\beta^2)
\end{equation}

where $b^2=a_{ij}b^ib^j$.\\
Let us define the  $hv$-torsion tensor $C_{ijk}=\frac{1}{2}\frac{\partial g_{ij}}{\partial y^k}$ as follows ~\cite{Shibata1984}:
\begin{align}
\label{eq2.23}
C_{ijk}=\frac{p_1(h_{ij}m_k+h_{jk}m_i+h_{ki}m_j)+\gamma_1m_im_jm_k}{2p}
\end{align}
and its   coefficients $\gamma_1$ and $m_i$ are evaluated by the  formulae  
\begin{align}
\label{eq2.24}
\gamma_1=p\frac{\partial p_0}{\partial\beta}-3p_1q_0, m_i=b_i-\frac{y_i\beta}{\alpha^2}
\end{align}
Here $m_i$ is known as non-zero covariant vector orthogonal to element of support $y^i$.
Let $\Gamma_{jk}^i$be the components of Christoffel symbol of the associated Riemannian space $R^n$ and $\nabla_k$ be the covariant differentiation with respect to $x^k$ relative to Christoffel symbol. Now we put\\
\begin{align}
\label{eq2.25}
2E_{ij}&=b_{ij}+b_{ji}\\
\label{eq2.26}
2F_{ij}&=b_{ij}-b_{ji}
\end{align}
where $b_{ij}=\nabla_jb_i$.
Let $C\Gamma=(\Gamma_{jk}^{*i},\Gamma_{0k}^{*i},C_{jk}^i)$ be Cartan connection of $(M,F)$. The difference tensor $D_{jk}^i=\Gamma_{jk}^{*i}-\Gamma_{jk}^i$ of the special Finsler manifold $(M,F)$ is given by ~\cite{Shibata1984}
  \begin{align}
 \nonumber
 \label{eq2.27}
 D_{jk}^i=& B^iE_{jk}+F_k^iB_j+F_j^iB_k+B_j^ib_{0k}+B_k^ib_{0j}-b_{0m}g^{im}B_{jk}
  -C_{jm}^iA_k^m\\ &-C_{km}^iA_j^m+C_{jkm}A_s^mg^{is}
 + \lambda^s(C_{jm}^iC{sk}^m+C_{km}^iC_{sj}^m-C_{jk}^mC_{ms}^i)
 \end{align}
 where
  \begin{align}
  \label{eq2.28}
 B_k&=p_0b_k+p_1y_k\\
 \label{2.29}
 B^i&=g^{ij}B_j\\
 \label{eq2.30}
 B_{ij}&=\frac{p_1(a_{ij}-\frac{y_iy_j}{\alpha^2})+\frac{\partial p_0}{\partial \beta}m_im_j}{2}\\
 \label{eq2.31}
 B_i^k&=g^{kj}B_{ji}\\
 \label{eq2.32}
 A_k^m&=B_k^mE_{00}+B^mE_{k0}+B_kF_0^m+B_0F_k^m\\
 \label{eq2.33}
 \lambda^m&=B^mE_{00}+2B_0F_0^m\\
 \label{eq2.34}
  F_i^k&=g^{kj}F_{ji}\\
  \label{eq2.35}
 B_0&=B_iY^i
 \end{align}
 Here as well as henceforward '0' denotes tensorial contraction with $y^i$ besides $p_0$,$q_0$ and $S_0.$\textcolor{white}{"}\\
 
 \begin{center}
 	\section{Induced Cartan Connection}
 \end{center}
\label{sec:3}
 	
 Let $(M,F)$ be a Finsler manifold, where $F(\alpha,\beta)=\frac{(\alpha+\beta)^{n+1}}{\alpha^n}$ is generalized square metric. Also, let $M^{n-1}$ be a  hypersurface of the Finsler manifold $(M,F)$ whose hypothetical picture is depicted in the Figure \ref{figure:4}.  
 

\begin{figure}[h!]
	\includegraphics[scale=0.5]{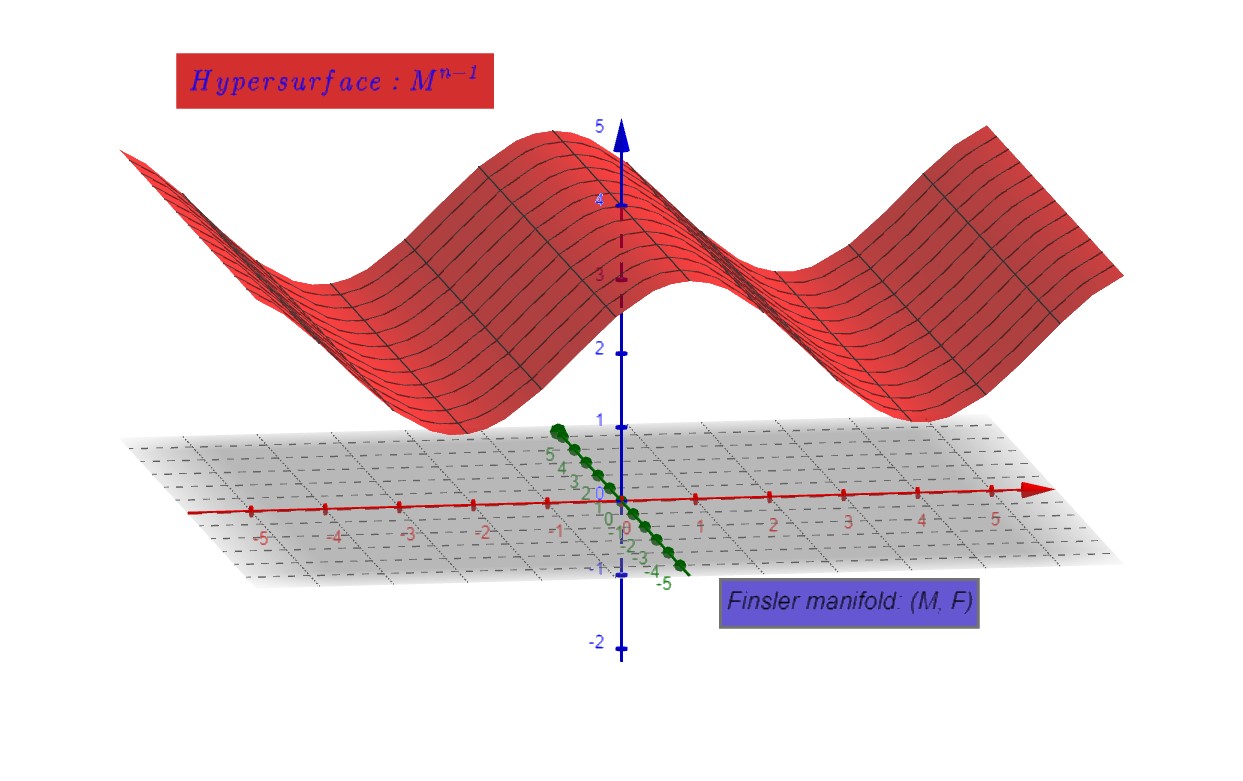}
	\caption{A hypothetical picture of hypersurface $M^{n-1}$ of the 3-dimensional Finsler manifold $(M,F)$.}
	\label{figure:5}
\end{figure}

 We will describe this hypersurface $M^{n-1}$ by following  parametric equations:
 \begin{align}
 	\label{eq3.1}
 	 x^i=x^i(u^\alpha), (i=1,2,3,........n; \alpha=1,2,3,........n-1)
 \end{align}
  
  where $u^\alpha$ is a parameter that  represents coordinates on the hypersurface $M^{n-1}$. 
  Do you know why we are considering parametric equation of the hypersurface?  We considered parametric equation only because of the fact that parametrization always makes our life simpler. Remember those days when we used to consider parametric equations of curves and surfaces. Moreover, using parametrizations we were able to calculate speed of the curves as well as tangent planes and normal lines of the sphere. In elementary differential geometry we also have learnt that parametrization helps us to determine the shape, that is, curvature and torsion  of a curves and surfaces. Thus a parametrization has more information than the set of points constituting the hypersurface $M^{n-1}$.   
  
 Now differentiating the  equation ~\eqref{eq3.1} of the hypersurface with respect to parameters $u^\alpha$, we get $B_\alpha^i=\frac{\partial x^i}{\partial u^\alpha}$. Here each $B_\alpha^i=\frac{\partial x^i}{\partial u^\alpha}$ for ($\alpha$=1,2,3,........n-1) represents components of tangent vectors and these tangent vectors $B_\alpha^i$ represent a tangent space at a point $p$ of the hypersurface $M^{n-1}$ . Let the matrix corresponding to first derivative $B_\alpha^i=\frac{\partial x^i}{\partial u^\alpha}$  be $[B_\alpha^i]=[\frac{\partial x^i}{\partial u^\alpha}]$,  and it has maximal rank , namely, (n-1). The maximal rank required here is to ensure that tangent vectors forms linearly independent set so that any generic  vector tangent to $M^{n-1}$ is linearly expressible in terms of these linearly independent tangent  vectors.   To introduce a Finsler structure in the hypersurface $M^{n-1}$ , the supporting element $y^i$ at a point $u^\alpha$ of $M^{n-1}$ is assumed to be tangential to $M^{n-1}$, so that we may write

  \begin{align}
  \label{eq3.2}
 y^i=B_\alpha^i(u)v^\alpha
 \end{align}
Therefore $v^\alpha$ is the element of support of hypersurface $M^{n-1}$ at the point $u^\alpha$. The metric tensor $g_{\alpha\beta}$ and hv-torsion tensor $C_{\alpha\beta\gamma}$ of hypersurface $M^{n-1}$ are defined by 
\begin{align}
	\label{eq3.3}
	g_{\alpha\beta}=g_{ij}B_\alpha^iB_\beta^j,C_{\alpha\beta\gamma}=C_{ijk}B_\alpha^iB_\beta^jB_\gamma^k
\end{align}

 Now the unit normal vector $N^i(u,v)$ at an arbitrary point $u^\alpha$ of the hypersurface $M^{n-1}$ is defined as follows: 
 \begin{definition}
 	A vector $N^i(u,v)$ at a point $u^\alpha$ of the hypersurface $M^{n-1}$ is said to be unit  normal vector if  
 	\begin{align}
 		\label{eq3.4}
 		g_{ij}(x(u,v),y(u,v))B_\alpha^iN^j=0,g_{ij}(x(u,v),y(u,v))N^iN^j=1
 	\end{align} 
 \end{definition}  
 
 Let us define angular metric tensor $h_{ij}$ as follows:
 \begin{definition}
 	We say the tensor $h_{ij}$ an angular metric tensor, if $h_{ij}$ satisfies the following conditions:
 	\begin{align}
 		\label{eq3.5}
 		h_{\alpha\beta}=h_{ij}B_\alpha^iB_\beta^j, h_{ij}B_\alpha^iN^j=0, h_{ij}N^iN^j=1
 	\end{align}
 \end{definition}

 Let $(B_i^\alpha,N_i)$ be the inverse of $(B_\alpha^i,N^i),$ then we have
 \begin{align*}
 	B_i^\alpha=g^{\alpha\beta}g_{ij}B_\beta^j,B_\alpha^iB_i^\beta=\delta_\alpha^\beta,B_i^\alpha N^i=0,\\B_\alpha^iN_i=0,N_i=g_{ij}N^i,B_i^k=g^{kj}B_{ji},\\B_\alpha^iB_j^\alpha+N^iN_j=\delta_j^i
 \end{align*}
 We shall denote by $\mathcal{I}$$C\Gamma$  the connection of a hypersurface $M^{n-1}$ induced from the Cartan connection $C\Gamma$. The induced Cartan connection $\mathcal{I}$$C\Gamma=(\Gamma_{\beta\gamma}^{*\alpha},G_\beta^\alpha,C_{\beta\gamma}^\alpha)$ on hypersurface $M^{n-1}$ induced from the Cartan's connection $C\Gamma=(\Gamma_{jk}^{*i},\Gamma_{0k}^{*i},C_{jk}^i)$ is given by ~\cite{Matsumoto1985}
 \begin{align}
 	\nonumber
 	\Gamma_{\beta\gamma}^{*\alpha}&=B_i^\alpha(B_{\beta\gamma}^i+\Gamma_{jk}^{*i}B_\beta^jB_\gamma^k)+M_\beta^\alpha H_\gamma\\
 	\nonumber G_\beta^\alpha&=B_i^\alpha(B_{0\beta}^i+\Gamma_{0j}^{*i}B_\beta^j)\\
 	\label{eq3.6}
 	C_{\beta\gamma}^\alpha&=B_i^\alpha C_{jk}^iB_\beta^jB_\gamma^k
 \end{align}
 where second fundamental $v$-tensor $M_{\beta\gamma}$ is defined by  
 \begin{align}
 	\nonumber
 	M_{\beta\gamma}&=N_iC_{jk}^iB_\beta^jB_\gamma^k\\
 	\nonumber      &=N_ig^{li}C_{ljk}B_\beta^jB_\gamma^k \quad\text{(using $C_{jk}^i=g^{li}C_{ljk}$)} \\
 	\nonumber      &=N^lC_{ljk}B_\beta^jB_\gamma^k       \quad\text{(using $N^ig^{li}=N^l$)}           \\
 	\label{eq3.7}    &=C_{ijk}B_\beta^iB_\gamma^jN^k      \quad\text{(adjusting the indices $j$, $k$ and $l$)}          \\
 	\nonumber    M_\beta^\alpha&=g^{\alpha\gamma}M_{\beta\gamma}
 \end{align}
 
 and normal curvature vector $H_\beta$ is defined by 
 
 \begin{align}
 	\nonumber H_\beta&=N_i(B_{0\beta}^i+\Gamma_{0j}^{*i}B_\beta^j),
 \end{align}
 where
 \begin{align}\nonumber
 	B_{\beta\gamma}^i&=\frac{\partial B_\beta^i}{\partial U^\gamma}\\
 	\nonumber B_{0\beta}^i&=B_{\alpha\beta}^iv^\alpha
 \end{align}
 The quantities $M_{\beta\gamma}$ and $H_\beta$ appeared in above equations are called the second fundamental $v$-tensor and normal curvature vector respectively ~\cite{Matsumoto1985}. The second fundamental $h$-tensor $H_{\beta\gamma}$ is defined as ~\cite{Matsumoto1985}
 \begin{align}
 	\label{eq3.8}
 	H_{\beta\gamma}=N_i(B_{\beta\gamma}^i+\Gamma_{jk}^{*i}B_\beta^jB_\gamma^k)+M_\beta H_\gamma
 \end{align}
 where
 \begin{align}
 	\label{eq3.9}
 	M_\beta=C_{jk}^iB_\beta^jN_iN^k=C_{ijk}B_\beta^iN^jN^k
 \end{align}
 The relative $h$-covariant derivative and  $v$-covariant derivative of projection factor $B_\alpha^i$ with respect to induced Cartan connection $\mathcal{I}$$C\Gamma$ are respectively given by 
 \begin{align}
 	\label{eq3.10}
 	B_{\alpha|\beta}^i=H_{\alpha\beta}N^i\\
 	\label{3.11} B_{\alpha|\beta}^i=M_{\alpha\beta}N^i
 \end{align}
 The equation ~\eqref{eq3.8} shows that $H_{\beta\gamma}$ is not always symmetric and 
 \begin{align}
 	\label{eq3.12}
 	H_{\beta\gamma}-H_{\gamma\beta}=M_\beta H_\gamma-M_\gamma H_\beta
 \end{align}
 Thus the above equation simplifies to 
 \begin{align}
 	\label{eq3.13}
 	H_{0\gamma}=H_\gamma, H_{\gamma0}=H_\gamma+M_\gamma H_0
 \end{align}

 \begin{definition}[~\cite{Matsumoto1985}] A hypersurface $M^{n-1}$ of a  Finsler manifold $(M,F)$ is said to be a hyperplane of first kind if each path connecting two different points of the hypersurface $M^{n-1}$ with respect to the induced Cartan connection $\mathcal{I}$$C\Gamma$ is also becomes the  path of the ambient Finsler manifold $(M,F)$ with respect to Cartan connection $C\Gamma$.
 \end{definition}
 
 \begin{definition}[~\cite{Matsumoto1985}]  A hypersurface $M^{n-1}$ of a  Finsler manifold $(M,F)$ is said to be a hyperplane of second kind if each $h$-path of the hypersurface $M^{n-1}$ with respect to the induced Cartan connection $\mathcal{I}$$C\Gamma$ is also the $h$-path of the ambient Finsler manifold $(M,F)$ with respect to Cartan connection $C\Gamma$.
 \end{definition}
 
 \begin{definition}[~\cite{Matsumoto1985}]  A hypersurface $M^{n-1}$ of a  Finsler manifold $(M,F)$ is said to be a hyperplane of third kind if unit normal vector $B^i$ of the hypersurface $M^{n-1}$ with respect to the metric $F$ is parallel along each curve $(u^\alpha, v^\alpha)$. 
 \end{definition}
 
 If one wants to prove a hypersurface a hyperplane of first kind, hyperplane of second kind and hyperplane of third kind, it is very difficult to prove it only  with the help of definitions mentioned above, in that situation one should  incorporate below sufficient conditions given by M. Matsumoto to prove the same:
 \begin{lemma}[~\cite{Matsumoto1985}]
 	\label{lemma3.6}
 	The normal curvature $H_0=H_\beta v^\beta$ vanishes if and only if normal curvature vector $H_\beta$ vanishes.
 \end{lemma}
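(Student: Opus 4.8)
The implication ``$H_\beta=0\Rightarrow H_0=0$'' is immediate from $H_0=H_\beta v^\beta$, so the content is the converse. I would first read the hypothesis as the statement that $H_0$ vanishes identically as a function of the line element $(u^\alpha,v^\alpha)$ of $M^{n-1}$ (pointwise vanishing of $H_0$ along a single direction does not force $H_\beta=0$, so this reading is essential), and then recover the $n-1$ scalar equations $H_\gamma=0$ from the single equation $H_0=0$ by differentiating in $v^\gamma$. The first ingredient is that $H_\beta$ is positively homogeneous of degree one in $v$: in $H_\beta=N_i(B_{0\beta}^i+\Gamma_{0j}^{*i}B_\beta^j)$ the unit normal $N_i$ and the Cartan coefficients $\Gamma_{jk}^{*i}$ are positively homogeneous of degree $0$ in $y=Bv$, while $B_{0\beta}^i=B_{\alpha\beta}^i v^\alpha$ and $\Gamma_{0j}^{*i}=\Gamma_{kj}^{*i}y^k$ are of degree $1$ in $v$; hence $H_0=v^\beta H_\beta$ is of degree $2$, and differentiating the identity $H_0=v^\beta H_\beta$ with respect to $v^\gamma$ gives
\[
\frac{\partial H_0}{\partial v^\gamma}=H_\gamma+v^\beta\,\frac{\partial H_\beta}{\partial v^\gamma}.
\]

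The identity I would aim to establish is
\[
v^\beta\,\frac{\partial H_\beta}{\partial v^\gamma}=H_{\gamma 0}=H_\gamma+M_\gamma H_0,
\]
the second equality being \eqref{eq3.13}. Granting it, the displayed formula becomes $\partial H_0/\partial v^\gamma=2H_\gamma+M_\gamma H_0$; since $H_0$ is identically zero its $v$-derivative vanishes as well, so $2H_\gamma=0$ and hence $H_\gamma=0$, which is the claim. To prove that identity I would differentiate the definition of $H_\beta$ in $v^\gamma$, using that $B_{\alpha\beta}^i=\partial^2x^i/\partial u^\alpha\partial u^\beta$ is symmetric in $\alpha,\beta$ and does not depend on $v$, and then contract with $v^\beta$. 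The principal part reassembles into $H_\gamma=N_i(B_{0\gamma}^i+\Gamma_{0j}^{*i}B_\gamma^j)$ (here one uses that $\Gamma_{jk}^{*i}$ is symmetric in $j,k$), and there remain exactly two correction terms: (i) $(\partial N_i/\partial v^\gamma)(B_{00}^i+\Gamma_{00}^{*i})$, where $B_{00}^i=B_{\alpha\beta}^i v^\alpha v^\beta$, $\Gamma_{00}^{*i}=\Gamma_{jk}^{*i}y^jy^k$, and one notes $N_i(B_{00}^i+\Gamma_{00}^{*i})=H_0$; and (ii) $N_i\,(\partial\Gamma_{kj}^{*i}/\partial y^l)\,y^ky^j\,B_\gamma^l$, arising from the $y$-dependence of the Cartan coefficients.

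The step I expect to be the main obstacle is showing that term (i) equals $M_\gamma H_0$ and term (ii) equals $0$. For (i), differentiate the orthogonality relations $N_iB_\alpha^i=0$ and $N_iN^i=1$ with respect to $v^\gamma$: since $B_\alpha^i$ is $v$-free the first gives $(\partial N_i/\partial v^\gamma)B_\alpha^i=0$, which annihilates the tangential part of $B_{00}^i+\Gamma_{00}^{*i}$, while the second, combined with $\partial g_{ij}/\partial y^k=2C_{ijk}$ and the definition \eqref{eq3.9} of $M_\gamma$, gives $(\partial N_i/\partial v^\gamma)N^i=M_\gamma$, so the normal part $H_0N^i$ leaves precisely $M_\gamma H_0$. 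For (ii), differentiating the geodesic-spray relation $\Gamma_{jk}^{*i}y^jy^k=2G^i$ (the $G^i$ being the geodesic spray coefficients) in $y^l$ gives $(\partial\Gamma_{kj}^{*i}/\partial y^l)\,y^ky^j=2(\partial G^i/\partial y^l-\Gamma_{0l}^{*i})$, and this vanishes because, for the Cartan connection, the $y$-contracted coefficient $\Gamma_{0l}^{*i}$ is the canonical nonlinear connection $\partial G^i/\partial y^l$; hence term (ii) is zero. Combining (i) and (ii) gives $v^\beta\,\partial H_\beta/\partial v^\gamma=H_\gamma+M_\gamma H_0$ and the lemma follows as above. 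One may further note that, since $H_0\equiv 0$ is assumed throughout, all the $M_\gamma H_0$ contributions drop out from the start, so for this lemma it suffices to know $v^\beta\,\partial H_\beta/\partial v^\gamma=H_\gamma$ modulo multiples of $H_0$.
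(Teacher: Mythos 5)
The paper does not actually prove this lemma: it is quoted verbatim from Matsumoto's 1985 hypersurface paper and used as a black box, so there is no in-text argument to compare yours against. Your reconstruction is, however, a correct and essentially complete proof, and it follows the standard route. The two points that carry all the weight are exactly the ones you isolate: (a) reading ``$H_0$ vanishes'' as an identity in the supporting element $v^\alpha$ (without which the converse direction is simply false), and (b) the differentiation identity $\partial H_0/\partial v^\gamma = 2H_\gamma + M_\gamma H_0$, obtained from the degree-one positive homogeneity of $H_\beta$ in $v$ together with $v^\beta\,\partial H_\beta/\partial v^\gamma = H_{\gamma 0} = H_\gamma + M_\gamma H_0$ as in \eqref{eq3.13}. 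Your treatment of the two correction terms checks out: $(\partial N_i/\partial v^\gamma)B_\alpha^i=0$ because $B_\alpha^i$ is $v$-free, the computation $(\partial N_i/\partial v^\gamma)N^i = 2C_{ijk}N^iN^jB_\gamma^k + N_j\partial N^j/\partial v^\gamma = 2M_\gamma - M_\gamma = M_\gamma$ is right, and the vanishing of $N_i(\partial\Gamma^{*i}_{kj}/\partial y^l)y^ky^jB_\gamma^l$ does follow from the deflection-free property $\Gamma^{*i}_{0l}=\partial G^i/\partial y^l$ of the Cartan connection. Your closing remark is also the efficient way to package it: since $H_0\equiv 0$ is the hypothesis, one only needs $\partial H_0/\partial v^\gamma \equiv 2H_\gamma \pmod{H_0}$, so the $M_\gamma H_0$ bookkeeping can be suppressed entirely. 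In short: the paper cites, you prove, and what you prove is the argument Matsumoto's citation is standing in for.
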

 \begin{lemma}[~\cite{Matsumoto1985}]
 	\label{lemma3.7}
 	A hypersurface $M^{n-1}$is a hyperplane of first kind if and only if $H_\alpha=0$.
 \end{lemma}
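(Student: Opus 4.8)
\section*{Proof proposal for Lemma~\ref{lemma3.7}}

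The plan is to turn the geometric definition of a hyperplane of the first kind into the single tensorial equation $H_\alpha=0$ by writing down, along a curve lying on $M^{n-1}$, both the path equation of $M^{n-1}$ relative to the induced connection $\mathcal{I}C\Gamma$ and the path equation of $(M,F)$ relative to $C\Gamma$, and comparing them. The bridge between the two equations is the projection factor $B^i_\alpha$ together with the structure equations~\eqref{eq3.6} and~\eqref{eq3.8}: the former expresses $\Gamma^{*\alpha}_{\beta\gamma}$ in terms of $B^i_{\alpha\beta}+\Gamma^{*i}_{jk}B^j_\alpha B^k_\beta$, and the latter extracts its normal component.

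Concretely, I would take a curve $u^\alpha(s)$ on $M^{n-1}$ with $v^\alpha=du^\alpha/ds$, let $x^i(s)=x^i(u(s))$ be the corresponding curve in $M$, so that $dx^i/ds=B^i_\alpha v^\alpha=y^i$ by~\eqref{eq3.2}, differentiate once more (using $\partial B^i_\alpha/\partial u^\beta=B^i_{\alpha\beta}$), and form the ambient spray operator $\ddot x^i+\Gamma^{*i}_{jk}y^jy^k$. Solving~\eqref{eq3.6} and~\eqref{eq3.8} for the tangential and normal parts of $B^i_{\alpha\beta}+\Gamma^{*i}_{jk}B^j_\alpha B^k_\beta$ and splitting via the completeness relation $B^i_\alpha B^\alpha_j+N^iN_j=\delta^i_j$, the second fundamental $v$-tensor enters only through $M^\alpha_\beta v^\beta$ and $M_\beta v^\beta$, and both vanish because $C_{ijk}y^i=0$ (homogeneity of $g_{ij}$); after using $H_{00}=H_0$ from~\eqref{eq3.13} I expect the clean identity
\[
\ddot x^i+\Gamma^{*i}_{jk}y^jy^k \;=\; \left(\dot v^\alpha+\Gamma^{*\alpha}_{\beta\gamma}v^\beta v^\gamma\right)B^i_\alpha \;+\; H_0\,N^i .
\]

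With this identity the argument closes quickly. If $u^\alpha(s)$ is a path of $M^{n-1}$, the first term on the right vanishes, leaving $\ddot x^i+\Gamma^{*i}_{jk}y^jy^k=H_0N^i$; since $N^i$ is a unit, hence nonzero, vector, the image curve is a path of $(M,F)$ precisely when $H_0=0$ along it. Because a path of $M^{n-1}$ can be prescribed through an arbitrary point with an arbitrary initial direction $v^\alpha$, requiring every such path to be a path of $(M,F)$ forces $H_0=H_\alpha v^\alpha$ to vanish identically, and Lemma~\ref{lemma3.6} then upgrades this to $H_\alpha=0$. Conversely, if $H_\alpha=0$ then $H_0=0$ and the displayed identity shows at once that every path of $M^{n-1}$ is a path of $(M,F)$.

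The part I expect to be the real work is the middle step: reading off correctly the tangential and normal components of $B^i_{\alpha\beta}+\Gamma^{*i}_{jk}B^j_\alpha B^k_\beta$ from~\eqref{eq3.6}--\eqref{eq3.8}, tracking which second--fundamental--form contributions survive, and fixing the convention that the relevant spray term is $\Gamma^{*i}_{jk}y^jy^k=\Gamma^{*i}_{00}=2G^i$ so that the comparison is reparametrization-compatible. The only conceptual point, rather than computational one, is the quantifier handling --- passing from \enquote{each path connecting two different points} to \enquote{$H_0\equiv 0$} --- which is exactly where Lemma~\ref{lemma3.6} is indispensable.
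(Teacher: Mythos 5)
Note first that the paper itself contains no proof of this lemma: it is stated as a quoted result from Matsumoto's 1985 work on Finslerian hypersurfaces, so there is no internal argument to compare yours against. Your reconstruction is essentially Matsumoto's original argument and it is sound. The key identity is correct: along a curve $u^\alpha(s)$ on $M^{n-1}$ one has $\ddot x^i=B^i_\alpha\dot v^\alpha+B^i_{\alpha\beta}v^\alpha v^\beta$, and splitting $B^i_{00}+\Gamma^{*i}_{00}$ by the completeness relation $B^i_\alpha B^\alpha_j+N^iN_j=\delta^i_j$ gives the normal part $N_j\bigl(B^j_{00}+\Gamma^{*j}_{00}\bigr)=H_0$ (since $N_iB^i_\alpha=0$) and the tangential part $\Gamma^{*\alpha}_{00}-M^\alpha_0H_0=\Gamma^{*\alpha}_{00}$ by \eqref{eq3.6}, where $M^\alpha_0=0$ follows from $C_{ijk}y^j=0$, exactly as you say; this yields your displayed decomposition of the ambient spray operator. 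The quantifier step is also handled correctly: since paths of the induced connection exist through every point of $M^{n-1}$ in every direction $v^\alpha$, the requirement that all of them be paths of $(M,F)$ forces $H_0(u,v)=0$ identically, and Lemma~\ref{lemma3.6} upgrades this to $H_\alpha=0$; the converse is immediate from the same identity. The only cosmetic remark is that \eqref{eq3.13} and the tensor $M_\beta$ are not actually needed for this lemma (they matter for the second-kind statement), but their mention does no harm.
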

 \begin{lemma}[~\cite{Matsumoto1985}]
 	\label{lemma3.8}
 	A hypersurface $M^{n-1}$ is a hyperplane of second kind with respect to Cartan connection $C\Gamma$ if and only if $H_\alpha=0$ and $H_{\alpha\beta}=0$.	
 \end{lemma}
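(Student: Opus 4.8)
The plan is to exploit the Gauss-type relation~\eqref{eq3.10} for the relative $h$-covariant derivative of the projection factors, $B_{\alpha|\beta}^i = H_{\alpha\beta}N^i$, together with the connection decompositions~\eqref{eq3.6} and~\eqref{eq3.8}. The guiding principle is that along any curve $u^\alpha(t)$ of $M^{n-1}$, with tangent $v^\alpha = du^\alpha/dt$ and lift $y^i = B_\alpha^i v^\alpha$ in $M$, the ambient $h$-covariant derivative of $y^i$ along the curve splits as $B_\alpha^i$ times the induced $h$-covariant derivative of $v^\alpha$ plus the purely normal term $H_{\alpha\beta}v^\alpha v^\beta N^i = H_0 N^i$ (using~\eqref{eq3.10} and $H_{0\gamma}=H_\gamma$ from~\eqref{eq3.13}). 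Thus the image of an $h$-path of $M^{n-1}$ deviates from being an $h$-path of $M$ exactly by these surviving normal contributions, which is what the equivalence must turn into an algebraic condition on $H_{\alpha\beta}$.

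For sufficiency, I would assume $H_\alpha=0$ and $H_{\alpha\beta}=0$. Then~\eqref{eq3.10} gives $B_{\alpha|\beta}^i = 0$, so the projection factors are $h$-parallel and the product rule yields $(B_\alpha^i X^\alpha)_{|\beta}v^\beta = B_\alpha^i\,(X^\alpha_{|\beta}v^\beta)$ for any vector field $X^\alpha$ along a curve of $M^{n-1}$. Taking $X^\alpha = v^\alpha$ along an $h$-path of $M^{n-1}$, the tangent is $h$-parallel there, so its lift $y^i$ is $h$-parallel along the image curve; hence the image is an $h$-path of $M$ and $M^{n-1}$ is a hyperplane of the second kind.

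For necessity, I would write the $h$-path equation of $M$ for the curve $x^i(u^\alpha(t))$, substitute $dx^i/dt = B_\alpha^i v^\alpha$ and $d^2x^i/dt^2 = B_{\alpha\beta}^i v^\alpha v^\beta + B_\alpha^i\,dv^\alpha/dt$, and use~\eqref{eq3.6} and~\eqref{eq3.8} to resolve $B_{\alpha\beta}^i + \Gamma_{jk}^{*i}B_\alpha^j B_\beta^k$ into its tangential part $B_\delta^i(\Gamma_{\alpha\beta}^{*\delta} - M_\alpha^\delta H_\beta)$ and its normal part $N^i(H_{\alpha\beta} - M_\alpha H_\beta)$. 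Comparing with the induced $h$-path equation and separating components, the normal part forces $H_0 = 0$ along every $h$-path, hence in every direction at every point, so $H_\alpha = 0$ by Lemma~\ref{lemma3.6} (this is exactly the content of Lemma~\ref{lemma3.7} for the first kind). To upgrade to $H_{\alpha\beta}=0$ I would vary the $h$-path: since such curves exist through every point in every direction and the lift must be an $h$-path identically in the supporting element, differentiating the resulting identities in $v^\alpha$ and feeding back~\eqref{eq3.13} and $H_\alpha=0$ should pin down the full non-symmetric tensor $H_{\alpha\beta}$.

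The main obstacle I anticipate is precisely this last upgrade. Preservation of $h$-paths, read off directly, only controls the quadratic form $H_{\alpha\beta}v^\alpha v^\beta$; one must use the extra rigidity carried by the $h$-connection $\Gamma_{jk}^{*i}$ --- which governs $h$-paths but is not visible to the geodesic spray controlling ordinary paths --- to conclude that the \emph{whole} second fundamental $h$-tensor vanishes, not merely its value on the supporting direction. The non-symmetry of $H_{\alpha\beta}$ recorded in~\eqref{eq3.12}, and the bookkeeping of the auxiliary quantities $M_\alpha$ and $M_{\alpha\beta}$, must also be handled carefully so as not to collapse the second-kind condition into the third-kind one.
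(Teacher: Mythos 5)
First, be aware that the paper itself offers no proof of this lemma: it is quoted directly from Matsumoto's 1985 work, so there is no in-paper argument to measure your proposal against. Judged on its own terms, your sufficiency direction is essentially sound: $H_{\alpha\beta}=0$ gives $B_{\alpha|\beta}^i=0$ by \eqref{eq3.10} (and already forces $H_\gamma=H_{0\gamma}=0$ by \eqref{eq3.13}), so the lift of an induced $h$-path is an ambient $h$-path.

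The necessity direction, however, contains exactly the gap you flag at the end, and as written the step would fail. If the supporting element is identified with the tangent of the curve --- which is what your substitution $dx^i/dt=B_\alpha^i v^\alpha$ combined with ``$H_0=0$ along every $h$-path'' amounts to --- then the only normal contribution you ever see is $H_{\alpha\beta}v^\alpha v^\beta=H_{0\beta}v^\beta=H_0$ by \eqref{eq3.13}. Differentiating that identity in $v^\alpha$ therefore reproduces only $H_\alpha=0$, i.e.\ the first-kind condition of Lemma~\ref{lemma3.7}; no amount of polarization recovers the remaining entries of $H_{\alpha\beta}$, because the tensor is never contracted against anything other than the supporting element itself. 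The missing idea is that an $h$-path of the Cartan connection is a curve \emph{together with} a supporting element transported along it, and that supporting element is independent of the curve's tangent direction. The normal defect of a lifted $h$-path is then $\bigl(H_{\alpha\beta}(u,v)-M_\alpha H_\beta\bigr)w^\alpha w^\beta$, with $w$ the tangent and $v$ the arbitrary supporting element; demanding that this vanish for all $w$ and all $v$, after $H_\alpha=0$ has removed the $M_\alpha H_\beta$ term and made $H_{\alpha\beta}$ symmetric via \eqref{eq3.12}, is what yields $H_{\alpha\beta}=0$ in full. Without decoupling $w$ from $v$, the second-kind condition collapses onto the first-kind one --- precisely the failure mode you anticipated but did not resolve.
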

 \begin{lemma}[~\cite{Matsumoto1985}]
 	\label{lemma3.9}
 	A hypersurface $M^{n-1}$ is a hyperplane of third kind with respect to Cartan connection $C\Gamma$ if and only if $H_\alpha=0,$ $H_{\alpha\beta}=0$ and $M_{\alpha\beta}=0$.
 \end{lemma}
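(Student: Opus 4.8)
The plan is to convert the defining property of a hyperplane of the third kind into a pair of tensorial identities on the unit normal $N^i$ and then match those against the induced-connection apparatus already recorded in Section~3. First I would make the defining property explicit: for $N^i$ to be parallel along every curve $(u^\alpha,v^\alpha)$ with respect to $C\Gamma$ means that, along an arbitrary curve $t\mapsto(u^\alpha(t),v^\alpha(t))$ in the line-element space of $M^{n-1}$, the Cartan-covariant derivative $\frac{DN^i}{dt}$ vanishes. Substituting $y^i=B_\alpha^iv^\alpha$ and using the structure equations~\eqref{eq3.6} of $\mathcal{I}C\Gamma$ together with the relative $h$- and $v$-covariant derivatives of $B_\alpha^i$ from~\eqref{eq3.10}--\eqref{3.11}, this derivative splits as
\[
\frac{DN^i}{dt}=N^i_{|\beta}\,\frac{du^\beta}{dt}+N^i|_\beta\,\frac{\delta v^\beta}{dt},
\]
where $N^i_{|\beta}$ and $N^i|_\beta$ are the relative $h$- and $v$-covariant derivatives of $N^i$. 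Since the curve is arbitrary, $du^\beta/dt$ and $\delta v^\beta/dt$ are independent, and hence $M^{n-1}$ is a hyperplane of the third kind if and only if $N^i_{|\beta}=0$ and $N^i|_\beta=0$ hold identically.

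The next step is to compute these two derivatives. Differentiating the normalizations $g_{ij}B_\alpha^iN^j=0$ and $g_{ij}N^iN^j=1$ relatively, using that the Cartan connection is both $h$- and $v$-metrical (so that no $hv$-torsion correction survives) together with the Gauss equations~\eqref{eq3.10}--\eqref{3.11}, and then decomposing with respect to the moving frame by means of the completeness relation $\delta_j^i=B_\alpha^iB_j^\alpha+N^iN_j$, one obtains Weingarten-type formulas of the shape
\[
N_{i|\beta}=-H_{\alpha\beta}B_i^\alpha,\qquad N_i|_\beta=-M_{\alpha\beta}B_i^\alpha,
\]
the a~priori possible component along $N_i$ being killed by differentiating $N^iN_i=1$. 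Since the $B_i^\alpha$ are linearly independent, $N^i_{|\beta}=0\iff H_{\alpha\beta}=0$; and contracting $H_{\alpha\beta}=0$ with $v^\alpha$ yields $H_\beta=H_{0\beta}=0$ by~\eqref{eq3.13}, so this is exactly the pair of conditions ``$H_\alpha=0$ and $H_{\alpha\beta}=0$''. Likewise $N^i|_\beta=0\iff M_{\alpha\beta}=0$, where $M_{\alpha\beta}$ is the symmetric second fundamental $v$-tensor~\eqref{eq3.7}. Putting the two equivalences together proves the lemma. Alternatively one may first note that a hyperplane of the third kind is a fortiori one of the second kind, so Lemma~\ref{lemma3.8} already supplies $H_\alpha=0$ and $H_{\alpha\beta}=0$, and only the $v$-covariant statement $N^i|_\beta=0\iff M_{\alpha\beta}=0$ remains to be proved.

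I expect the real work to sit in the first step: writing out the covariant derivative of $N^i$ along a line-element curve and checking that it genuinely decomposes into independent $h$- and $v$-contributions with coefficients $N^i_{|\beta}$ and $N^i|_\beta$. That is where one must be careful about how $\delta y^k$ along the curve relates to $\delta v^\beta$ and about which connection coefficients act on the ambient indices as opposed to the hypersurface indices. Once that splitting and the Gauss--Weingarten formulas are available, the rest --- the independence of $du^\beta$ and $\delta v^\beta$, the passage from $H_{\alpha\beta}B_i^\alpha=0$ to $H_{\alpha\beta}=0$, and the use of~\eqref{eq3.13} to absorb the $H_\alpha=0$ clause --- is routine.
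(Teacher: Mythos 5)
The paper does not prove this lemma at all: it is quoted verbatim from Matsumoto's 1985 work (together with Lemmas \ref{lemma3.6}--\ref{lemma3.8}) and used later only as a black box, so there is no in-paper argument to compare against. Your reconstruction is essentially Matsumoto's original proof and is sound: differentiating $g_{ij}B_\alpha^iN^j=0$ and $g_{ij}N^iN^j=1$ with the $h$- and $v$-metrical Cartan connection and the Gauss equations \eqref{eq3.10}--\eqref{3.11} does give $N_{j|\beta}=-H_{\alpha\beta}B_j^\alpha$ and $N_j|_\beta=-M_{\alpha\beta}B_j^\alpha$, the normal component dying because $N^iN_i=1$; linear independence of the $B_j^\alpha$ then converts parallelism into $H_{\alpha\beta}=0$ and $M_{\alpha\beta}=0$, and \eqref{eq3.13} makes the separate clause $H_\alpha=0$ redundant, exactly as you say. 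Two points deserve explicit care if you write this out. First, since $H_{\alpha\beta}$ is not symmetric (cf.\ \eqref{eq3.12}), you should fix which index is contracted in the Weingarten formula ($N_{j|\beta}B^j_\alpha=-H_{\alpha\beta}$, so it is the first index that pairs with $B_j^\alpha$); this does not affect the equivalence but it does affect the formula. Second, the independence of $du^\beta/dt$ and $\delta v^\beta/dt$ holds precisely because the definition in this paper takes curves $(u^\alpha(t),v^\alpha(t))$ in the line-element space rather than natural lifts $v^\alpha=du^\alpha/dt$; with natural lifts the two contributions would not be independent and the argument would need modification. Your alternative shortcut (third kind $\Rightarrow$ second kind, so only the $v$-derivative remains) is fine as a citation of Lemma \ref{lemma3.8} but is not immediate from the path-based definitions as stated, so the direct Gauss--Weingarten route is the cleaner one.
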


\begin{center}
\hfil \
\section{Hypersurface $M^{n-1}$ of the special Finsler space}\hfil
\end{center}
\label{sec:4}

We know that a hypersurface is an embedded subspace of codimension 1. That is, a subspace of dimension less that 1 than the dimension of a given manifold is called hypersuraface of the given  manifold or space. For the sake of our study we are dealing with hypersurfaces rather than the  subspaces of arbitrary dimensions. In this paper  we are specifically confined to Finslerian  hypersurfaces $M^{n-1}$. \\
Let us proof the following propositions in context of Finslerian hypersurfaces.
\begin{proposition}
	Let $(M,F)$ be a Finsler manifold, where $F(\alpha,\beta)=\frac{(\alpha+\beta)^{n+1}}{\alpha^n}$, $n\in N$, is a generalized square metric and  $M^{n-1}$ be its hypersurface.  Then fundamental function of the hypersurface  $M^{n-1}$ induced from the Finsler manifold $(M,F)$ is a Riemannian metric.
\end{proposition}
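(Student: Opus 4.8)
The plan is to write the induced fundamental function out explicitly and then exploit the behaviour of the generalized square metric when $\beta$ is switched off. By \eqref{eq3.2} the supporting element of $M^{n-1}$ at the point $u^\alpha$ is $y^i=B_\alpha^i(u)v^\alpha$, so the function induced on $M^{n-1}$ from $F$ is $\bar F(u,v):=F\bigl(x(u),B_\alpha^i(u)v^\alpha\bigr)$. Since $F$ is an $(\alpha,\beta)$-metric, I would first substitute $y^i=B_\alpha^i v^\alpha$ and $x=x(u)$ into $\alpha^2=a_{ij}y^iy^j$ and $\beta=b_iy^i$ to read off the two induced quantities
\[
\bar\alpha(u,v)=\sqrt{\bar a_{\alpha\beta}(u)\,v^\alpha v^\beta}\,,\qquad \bar\beta(u,v)=\bar b_\alpha(u)\,v^\alpha\,,
\]
where $\bar a_{\alpha\beta}(u):=a_{ij}(x(u))B_\alpha^i(u)B_\beta^j(u)$ and $\bar b_\alpha(u):=b_i(x(u))B_\alpha^i(u)$; thus $\bar F=(\bar\alpha+\bar\beta)^{n+1}/\bar\alpha^{\,n}$ is, a priori, again a generalized square $(\alpha,\beta)$-metric on $M^{n-1}$.

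Next I would dispose of two routine points. First, $\bar a_{\alpha\beta}$ is genuinely a Riemannian metric on $M^{n-1}$: it is symmetric by construction, and because $[B_\alpha^i]$ has maximal rank $n-1$ and $[a_{ij}]$ is positive definite, for $v\neq 0$ one has $\bar a_{\alpha\beta}v^\alpha v^\beta=a_{ij}w^iw^j>0$ with $w^i:=B_\alpha^i v^\alpha\neq 0$; smoothness in $u$ is inherited from $a_{ij}$ and the parametrisation. Second, the generalized square metric satisfies the trivial identity $F(\alpha,0)=\alpha^{n+1}/\alpha^{\,n}=\alpha$. Consequently the proposition reduces to the single assertion that the induced $1$-form vanishes identically, i.e.\ $\bar b_\alpha(u)=b_i(x(u))B_\alpha^i(u)=0$ for every $\alpha=1,\dots,n-1$: granting this, $\bar F(u,v)=\bar\alpha(u,v)=\sqrt{\bar a_{\alpha\beta}(u)\,v^\alpha v^\beta}$, which is precisely a Riemannian fundamental function on $M^{n-1}$ (it trivially satisfies the four requirements of Definition \ref{difinition1.0.8}).

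The remaining step -- proving $b_iB_\alpha^i\equiv 0$ on $M^{n-1}$ -- is the heart of the argument, and it is the point at which the description of the hypersurface must be brought in. The condition $b_iB_\alpha^i\equiv 0$ says exactly that the $1$-form $\beta=b_i\,dx^i$ pulls back to zero under $x^i=x^i(u^\alpha)$, equivalently that the contravariant vector $b^i=a^{ij}b_j$ points along the unit normal $N^i$ of \eqref{eq3.4}. The natural way to secure this is to realize $M^{n-1}$ as a level hypersurface $b(x)=\text{const}$ of a scalar potential of the field $b_i$ (so that $b_i=\partial b/\partial x^i$), whence $b_iB_\alpha^i=\frac{\partial b}{\partial x^i}\frac{\partial x^i}{\partial u^\alpha}=\frac{\partial (b\circ x)}{\partial u^\alpha}=0$. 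I expect this identification of $M^{n-1}$ (together with the check that $b_i$ is parallel to $N_i$, if one prefers to argue through the normal) to be the only non-routine ingredient; everything else is a direct substitution into \eqref{eq2.1} and bookkeeping for $\bar\alpha$, $\bar\beta$ and the positive-definiteness of $\bar a_{\alpha\beta}$.
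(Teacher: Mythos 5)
Your proposal is correct and follows essentially the same route as the paper: both realize $M^{n-1}$ as a level hypersurface $b(x)=c$ of a potential for $b_i$, deduce $b_iB_\alpha^i=0$ and hence $\beta=b_iy^i=0$ for the tangential supporting element $y^i=B_\alpha^iv^\alpha$, and then substitute into $F$ to obtain $\bar F(u,v)=\sqrt{a_{\alpha\beta}v^\alpha v^\beta}$. Your added checks (positive-definiteness of $a_{\alpha\beta}=a_{ij}B_\alpha^iB_\beta^j$ via the maximal rank of $[B_\alpha^i]$, and the identity $F(\alpha,0)=\alpha$) are routine refinements of the paper's argument rather than a different approach.
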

\begin{proof}
	
It is given that $(M,F)$ be a Finsler manifold, where $F(\alpha,\beta)=\frac{(\alpha+\beta)^{n+1}}{\alpha^n}$, $n\in N$, is a generalized square metric.  Let level equation of the hypersurface  $M^{n-1}$ be given by
 
 \begin{equation*}
 	b(x)=c
 \end{equation*}
 
  where c is a  real number. \\
  Take the gradient of the above level equation  representing hypersurface $M^{n-1}$, we get  
  \begin{equation*}
  	b_i(x)=\partial_ib
  \end{equation*}
  
   Again consider the parametric equation of the same hypersurface $M^{n-1}$ as 
   
   \begin{equation*}
   	x^i=x^i(u^\alpha)
   \end{equation*}
   
   Differentiating the equation of hypersurface $b(x(u))=c$ with respect to parameter $u^\alpha$, we get
   \begin{align*}
   	\frac{\partial b(x(u))}{\partial x^i}\frac{\partial x^i}{\partial u^\alpha}=0\\
   	b_i(x) B^i_\alpha=0
   \end{align*}
     where   $b_i(x)=\frac{\partial b(x(u))}{\partial x^i}$ and $B^i_\alpha=\frac{\partial x^i}{\partial u^\alpha}$\\
     
      This implies that $b_i(x)$ are normal vector field (covariant component) of hypersurface $M^{n-1}$.\\

Thus at any point of the hypersurface  $M^{n-1}$ we now have
\begin{align}
 \label{eq4.1}
b_iB_\alpha^i=0\\
 \label{eq4.2}
b_iy^i=0 ,i.e., \beta=0
\end{align}

Now, we will see how generalized square metric $F=\frac{(\alpha+\beta)^{n+1} }{\alpha^n }$, $n\in N$, induces a metric on the hypersurface $M^{n-1}$. In this case we will denote induced metric  by $\bar{F}$.
First consider the generalized square metric 
\begin{align*}
	F&=\frac{(\alpha+\beta)^{n+1} }{\alpha^n }\\
	&=\frac{(\sqrt{a_{ij}y^iy^j}+b_iy^i)^{n+1} }{(\sqrt{a_{ij}y^iy^j})^n}\\ 
	&=\frac{\left(\sqrt{a_{ij}B_\alpha^i(u)v^\alpha B_\beta^j(u)v^\beta}+b_iy^i\right)^{n+1}}{\left(\sqrt{a_{ij}B_\alpha^i(u)v^\alpha B_\beta^j(u)v^\beta}\right)^n }\\ 
	&=\frac{\left(\sqrt{a_{ij}B_\alpha^i(u) B_\beta^j(u)v^\alpha v^\beta}+b_iy^i\right)^{n+1}}{\left(\sqrt{a_{ij}B_\alpha^i(u) B_\beta^j(u)v^\alpha v^\beta}\right)^n }
\end{align*}

which is the general induced metric on the corresponding hypersurface  $M^{n-1}$.  Using equation  ~\eqref{eq4.2},  general induced metric of the hypersurface becomes
\begin{align}
	\nonumber F(u,v)&=\frac{\left(a_{ij}B_\alpha^i(u) B_\beta^j(u)v^\alpha v^\beta\right)^{\frac{n+1}{2}}}{\left(a_{ij}B_\alpha^i(u) B_\beta^j(u)v^\alpha v^\beta\right)^{\frac{n}{2}} }\\
	\nonumber&=\sqrt{a_{ij}B_\alpha^i(u)B_\beta^j(u)v^\alpha v^\beta}\\
	\label{eq4.3}   \bar{F}(u,v)&=\sqrt{a_{\alpha\beta}v^\alpha v^\beta}
\end{align} 
where $a_{\alpha\beta}=a_{ij}B_\alpha^i(u)B_\beta^j(u)$.\\
Thus function represented by equation ~\eqref{eq4.3} is  fundamental function or the metric of the hypersurface $M^{n-1}$ induced from the ambient Finsler manifold $(M,F)$.

The fundamental function of the hypersurface $M^{n-1}$ represented by equation ~\eqref{eq4.3} do not have $\beta$ component as  $\beta=b_iy^i=0$ over the hypersurface  $M^{n-1}$  therefore  fundamental function of the hypersurface  $M^{n-1}$ induced from the Finsler manifold $(M,F)$ is a Riemannian metric.
\end{proof}

\begin{proposition}
	Let $(M,F)$ be a Finsler manifold, where $F(\alpha,\beta)=\frac{(\alpha+\beta)^{n+1}}{\alpha^n}$, $n\in N$, is a generalized square metric and  $M^{n-1}$ be its associated  hypersurface. Then the covariant and contravariant components  of normal vector field on the hypersurface $M^{n-1}$ are given by  
	
	\begin{enumerate}
		\item $b_i=\sqrt{\frac{b^2}{1+n(n+1)}}N_i$
		\item $b^i=\sqrt{b^2\left\{1+n(n+1)\right\}}N^i+\frac{b^2}{\alpha}y^i$
	\end{enumerate}
	
	\end{proposition}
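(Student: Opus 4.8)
The plan is to use the two facts, derived in the proof of the preceding proposition, that along $M^{n-1}$ one has $b_iB^i_\alpha=0$ (equation \eqref{eq4.1}) and $\beta=b_iy^i=0$ (equation \eqref{eq4.2}). First observe that the covector $N_i:=g_{ij}N^j$ also annihilates every tangent vector $B^i_\alpha$, since this is exactly the orthogonality condition $g_{ij}B^i_\alpha N^j=0$ of \eqref{eq3.4}. Because $[B^i_\alpha]$ has maximal rank $n-1$, the covectors killing the linear span of $B^i_1,\dots,B^i_{n-1}$ form a one-dimensional space; both $b_i$ and $N_i$ lie in it, and $b_i\neq 0$ (as $b^2\neq 0$ and $a^{ij}$ is positive definite), so there is a scalar $\lambda$ with $b_i=\lambda N_i$. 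Proving (1) therefore amounts to computing $\lambda$, and (2) will follow by raising an index with $g^{ki}$.

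Next I would record the relevant coefficients on the hypersurface, i.e.\ at $\beta=0$. Substituting $\beta=0$ into \eqref{eq2.14}--\eqref{eq2.17} gives $p=1$, $p_0=(n+1)(2n+1)$, $p_1=\frac{n+1}{\alpha}$ and $p_2=0$, hence $p_0p_2-p_1^2=-\frac{(n+1)^2}{\alpha^2}$; inserting these into \eqref{eq2.22}, \eqref{eq2.19} and \eqref{eq2.20} gives, at $\beta=0$, $\zeta=1+n(n+1)b^2$, $S_0=\frac{n(n+1)}{1+n(n+1)b^2}$ and $S_1=\frac{n+1}{\alpha(1+n(n+1)b^2)}$. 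The coefficient $S_2$ plays no role, since in every contraction below it is multiplied by $y^ib_i=\beta=0$.

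To determine $\lambda$, I would contract the reciprocal metric \eqref{eq2.18} with $b_ib_j$: using $a^{ij}b_ib_j=b^2=b^ib_i$ and $y^ib_i=\beta=0$, the $S_1$- and $S_2$-terms drop and $g^{ij}b_ib_j=\frac{b^2}{p}-S_0(b^2)^2$. On the other hand, $b_i=\lambda N_i$ gives $g^{ij}b_ib_j=\lambda^2 g^{ij}N_iN_j=\lambda^2$, since $g^{ij}N_iN_j=g^{ij}g_{ik}g_{jl}N^kN^l=g_{kl}N^kN^l=1$ by \eqref{eq3.4}. Equating the two expressions, substituting the $\beta=0$ values of $p$ and $S_0$, and simplifying, gives $\lambda^2$ and hence (1). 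For (2), applying $g^{ki}$ to $b_i=\lambda N_i$ gives $N^k=\lambda^{-1}g^{ki}b_i$, while contracting \eqref{eq2.18} with $b_j$ (again the $\beta$-terms vanish) gives $g^{ki}b_i=(1-S_0b^2)b^k-S_1b^2y^k$ at $\beta=0$; solving the linear relation $\lambda N^k=(1-S_0b^2)b^k-S_1b^2y^k$ for $b^k$ and inserting the values of $S_0$, $S_1$ and $\lambda$ found above gives (2).

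The conceptual content here is light --- the one-dimensionality of the normal space and the normalisation $g^{ij}N_iN_j=1$ --- so I expect the main obstacle to be bookkeeping: evaluating $\zeta$, $S_0$ and $S_1$ cleanly at $\beta=0$ while keeping the powers of $\alpha$ and of $b^2$ straight, and then the final algebraic rearrangement to isolate $b^k$ in part (2).
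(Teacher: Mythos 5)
Your proposal follows essentially the same route as the paper: evaluate $p,p_0,p_1,p_2$ and hence $\zeta,S_0,S_1$ at $\beta=0$, contract the reciprocal metric \eqref{eq2.18} with $b_ib_j$ to identify the proportionality factor between $b_i$ and $N_i$, and then contract with $b_j$ alone to solve for $b^i$; your explicit justification that $b_i$ and $N_i$ are proportional (one-dimensionality of the annihilator of the span of the $B^i_\alpha$) is the only step the paper leaves implicit. One caveat: carrying your contraction through honestly gives $g^{ij}b_ib_j=b^2-S_0b^4=\frac{b^2}{1+n(n+1)b^2}$, so the denominator $1+n(n+1)$ appearing in the stated formulas (and in the paper's own final simplification) seems to drop a factor of $b^2$ --- a defect of the paper's bookkeeping rather than of your method.
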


\begin{proof}
	It is given that $M^{n-1}$ is a hypersurface of the manifold $(M,F)$, where $F(\alpha,\beta)=\frac{(\alpha+\beta)^{n+1}}{\alpha^n}$, $n\in N$, is a generalized square metric.
	
	Moreover, we know from equation ~\eqref{eq4.2} that $\beta=0$ over the hypersurface $M^{n-1}$. Let us calculate the value of $p$, $p_0$, $p_1$ and  $p_2$. For that,  substitute the value of $\beta=0$ into equations ~\eqref{eq2.14}, ~\eqref{eq2.15}, ~\eqref{eq2.16},  and ~\eqref{eq2.17}, we get
	
	\begin{align}
		\label{eq4.4}
		p=1, p_0=(n+1)(2n+1), p_1=\frac{n+1}{\alpha}, p_2=0
	\end{align}
	Now put the values of $p$, $p_0$, $p_1$, $p_2$ into equations ~\eqref{eq2.19}, ~\eqref{eq2.20},  ~\eqref{eq2.21} and ~\eqref{eq2.22}, we get
	\begin{align}
		\label{eq4.5}
		S_0&=\frac{n(n+1)}{1+n(n+1)b^2}\\
		\label{eq4.6}
		S_1&=\frac{n+1}{\alpha \left\{1+n(n+1)b^2\right\}}\\
		\label{eq4.7}
		S_2&=-\frac{(n+1)^2b^2}{\alpha^2 \left\{1+n(n+1)b^2\right\}}\\
		\label{eq4.8}
	  \zeta&=1+n(n+1)b^2
	\end{align}
	Substituting the  values of $p,S_0,S_1, S_2$ from the equations ~\eqref{eq4.4}, ~\eqref{eq4.5},  ~\eqref{eq4.6} and ~\eqref{eq4.7}  into equation ~\eqref{eq2.18}, we have
	\begin{align}
		\nonumber
	g^{ij}&=\frac{a^{ij}}{1}-\frac{n(n+1)}{1+n(n+1)b^2}\times b^ib^j-\frac{n+1}{\alpha \left\{1+n(n+1)b^2\right\}}\times (b^iy^j+b^jy^i)+\\
\label{eq4.9}	&\frac{(n+1)^2b^2}{\alpha^2 \left\{1+n(n+1)b^2\right\}}\times y^iy^j
	\end{align}
	Multiplying equation ~\ref{eq4.9} by $b_ib_j$ and using   $\beta=b_iy^i=0$, over the hypersurface $M^{n-1}$, it becomes
	\begin{align*}
		g^{ij}&=a^{ij} \times b_ib_j -\frac{n(n+1)}{1+n(n+1)b^2}\times b^ib^j \times b_ib_j-
			\frac{n+1}{\alpha \left\{1+n(n+1)b^2\right\}}\times (b^iy^j+b^jy^i) \times b_ib_j+\\
			&\frac{(n+1)^2b^2}{\alpha^2 \left\{1+n(n+1)b^2\right\}}\times y^iy^j \times b_ib_j\\ \\
	    	&=(a^{ij}b_i)b_j -\frac{n(n+1)}{1+n(n+1)b^2}\times (b^ib_i)(b^jb_j)-\\
			&\frac{n+1}{\alpha \left\{1+n(n+1)b^2\right\}}\times \left\{(b^ib_i)(b_jy^j)+(b_ib^j)(b_iy^i)\right\} + 
			\frac{(n+1)^2b^2}{\alpha^2 \left\{1+n(n+1)b^2\right\}}\times \left\{(b_iy^i)(b_jy^j)\right\} \\ 
			&=b^jb_j -\frac{n(n+1)}{1+n(n+1)b^2}\times (b^2)(b^2)-\\
			&\frac{n+1}{\alpha \left\{1+n(n+1)b^2\right\}}\times \left\{(b^2)(0)+(b^2)(0)\right\} +  
			\frac{(n+1)^2b^2}{\alpha^2 \left\{1+n(n+1)b^2\right\}}\times \left\{(0)(0)\right\} \\ \\
			&=b^2 -\frac{n(n+1)}{1+n(n+1)b^2}\times b^4\\ 
			g^{ij}b_ib_j&=\frac{b^2}{1+n(n+1)}
	\end{align*}
	Thus at any generic point  of the hypersurface $M^{n-1}$, we have 
	\begin{align*}
			g^{ij}b_ib_j&=\frac{b^2}{1+n(n+1)}
	\end{align*}
	
	Now from the above equation and using the equation  ~\eqref{eq3.4}, we get                     
	\begin{align}
		\label{eq4.10}
		 b_i=\sqrt{\frac{b^2}{1+n(n+1)}}N_i
	\end{align}
which is the covariant component of the normal vector field on the hypersurface $M^{n-1}$.
	Now from ~\eqref{eq4.9} and ~\eqref{eq4.10} we get
	
	\begin{align}
		\nonumber
		b^i&=a^{ij}b_j\\
		\label{eq4.11}
		&=\sqrt{b^2\left\{1+n(n+1)\right\}}N^i+\frac{b^2}{\alpha}y^i
	\end{align}
	which is the contravariant component of the normal vector field on the hypersurface $M^{n-1}$.
\end{proof}

\begin{proposition}
	Let $(M,F)$ be a Finsler manifold, where $F(\alpha,\beta)=\frac{(\alpha+\beta)^{n+1}}{\alpha^n}$, $n\in N$, is a generalized square metric and  $M^{n-1}$ be its associated  hypersurface. Then second fundamental v-tensor of hypersurface $M^{n-1}$ is given by \\
	
		 $M_{\alpha \beta}=\frac{(n+1)}{2\alpha}\left(\sqrt{\frac{b^2}{1+n(n+1)}}\right)h_{\alpha\beta}$\\
		 
		 and second fundamental h-tensor $H_{\alpha\beta}$ is symmetric, i.e., $H_{\alpha\beta}=H_{\beta\alpha}$.
	
	 \end{proposition}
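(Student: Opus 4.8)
The plan is to compute $M_{\alpha\beta}$ directly from its definition $M_{\alpha\beta}=C_{ijk}B_\alpha^i B_\beta^j N^k$ in \eqref{eq3.7}, feeding in the explicit form \eqref{eq2.23} of the $hv$-torsion tensor together with the two hypersurface relations $b_iB_\alpha^i=0$ and $\beta=b_iy^i=0$ from \eqref{eq4.1}--\eqref{eq4.2}, the coefficient values \eqref{eq4.4} at $\beta=0$, and the covariant normal formula \eqref{eq4.10}.

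First I would observe that on $M^{n-1}$ the vector $m_i=b_i-y_i\beta/\alpha^2$ collapses to $m_i=b_i$, and $p=1$, $p_1=(n+1)/\alpha$, so that \eqref{eq2.23} becomes
\[
C_{ijk}=\tfrac{1}{2}\Bigl[\tfrac{n+1}{\alpha}\bigl(h_{ij}b_k+h_{jk}b_i+h_{ki}b_j\bigr)+\gamma_1\,b_ib_jb_k\Bigr]
\]
along the hypersurface. The decisive point is that upon contracting with $B_\alpha^i B_\beta^j N^k$ every term carrying a factor $b_iB_\alpha^i$ or $b_jB_\beta^j$ dies by \eqref{eq4.1}: this kills the $h_{jk}b_i$ and $h_{ki}b_j$ pieces and the entire $\gamma_1$ term, so $\gamma_1$ need not even be evaluated. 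Only $h_{ij}B_\alpha^i B_\beta^j\cdot b_kN^k$ survives; by \eqref{eq3.5} the first factor is $h_{\alpha\beta}$, and by \eqref{eq4.10} together with $N_kN^k=1$ the second is $\sqrt{b^2/(1+n(n+1))}$. This yields precisely $M_{\alpha\beta}=\frac{n+1}{2\alpha}\sqrt{b^2/(1+n(n+1))}\,h_{\alpha\beta}$.

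For the symmetry of $H_{\alpha\beta}$ I would invoke the identity \eqref{eq3.12}, which reduces the claim to showing that $M_\beta=C_{ijk}B_\beta^i N^j N^k$ vanishes on $M^{n-1}$. Running the same contraction on the displayed $C_{ijk}$: the $h_{ij}b_k$ and $h_{ki}b_j$ terms each produce a factor of the form $h_{ij}B_\beta^iN^j=0$ by \eqref{eq3.5}; the $h_{jk}b_i$ term produces $h_{jk}N^jN^k=1$ multiplied by $b_iB_\beta^i=0$; and the $\gamma_1$ term again carries $b_iB_\beta^i=0$. Hence $M_\beta=0$, and \eqref{eq3.12} gives $H_{\beta\gamma}-H_{\gamma\beta}=M_\beta H_\gamma-M_\gamma H_\beta=0$, i.e. $H_{\alpha\beta}=H_{\beta\alpha}$.

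The argument is essentially a single substitution followed by repeated use of the orthogonality relations, so I do not anticipate a genuine obstacle; the only thing demanding attention is careful index bookkeeping --- tracking which of $b_i$, $y_i$, $N_i$ annihilates $B_\alpha^i$ versus $N^i$, and applying the angular-metric identities $h_{ij}B_\alpha^iB_\beta^j=h_{\alpha\beta}$, $h_{ij}B_\alpha^iN^j=0$, $h_{ij}N^iN^j=1$ to the correct index pairs after the relabelling of dummy indices in \eqref{eq3.7}.
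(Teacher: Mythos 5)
Your proposal is correct and follows essentially the same route as the paper: substitute the $\beta=0$ values of $p$, $p_1$, $m_i$ into the $hv$-torsion formula \eqref{eq2.23}, contract with $B_\alpha^i B_\beta^j N^k$ using \eqref{eq4.1}, \eqref{eq3.5} and \eqref{eq4.10} to get $M_{\alpha\beta}$, then show $M_\alpha=0$ and conclude symmetry of $H_{\alpha\beta}$ from \eqref{eq3.12}. The only (harmless) difference is that you skip the explicit evaluation of $\gamma_1$, correctly noting its term is annihilated by $b_iB_\alpha^i=0$, whereas the paper computes $\gamma_1=n(n^2-1)/\alpha$ before contracting.
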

\begin{proof}
	
		It is given that $M^{n-1}$ is a hypersurface of the manifold $(M,F)$, where $F(\alpha,\beta)=\frac{(\alpha+\beta)^{n+1}}{\alpha^n}$, $n\in N$, is a generalized square metric.
	
	Moreover, we know from equation ~\eqref{eq4.2} that $\beta=0$ over the hypersurface $M^{n-1}$. Put the value of $\beta=0$ into equations ~\eqref{eq2.14}, ~\eqref{eq2.15}, ~\eqref{eq2.16},  and ~\eqref{eq2.17}, we get
	
	\begin{align*}
	p=1, p_0=(n+1)(2n+1), p_1=\frac{n+1}{\alpha}, p_2=0
	\end{align*}

Now,	put the values of $p,p_0,p_1$ and $p_2$ obtained above into equation ~\ref{eq2.13}, we get fundamental metric tensor of the hypersurace $M^{n-1}$ 
	\begin{align}
		\label{eq4.12}
		g_{ij}=a_{ij}+(n+1)(2n+1)b_ib_j+\frac{(n+1)}{\alpha}(b_iy_j+b_jy_i)
	\end{align}
Let us calculate the value of $q_0$, $q_1$ and  $q_2$. For that,  substitute the value of $\beta=0$ into equations  ~\eqref{eq2.10}, ~\eqref{eq2.11},  and ~\eqref{eq2.12}, we get

	\begin{align*}
	q_0=n(n+1), q_1=0, q_2=-\frac{1}{\alpha^2}
\end{align*}

Substituting the values of  $p,q_0,q_1$ and $q_2$ in equation ~\ref{eq2.8}, we get angular metric tensor of the hypersurface $M^{n-1}$  
	
	\begin{align}
		\label{eq4.13}
		h_{ij}= a_{ij}+n(n+1)b_ib_j-\frac{1}{\alpha^2}y_iy_j
	\end{align}

	Differentiating equation~\ref{eq2.15} with respect to $\beta$, we have
	
	\begin{align*}
		\frac{\partial p_0}{\partial \beta}=\frac{2n(n+1)(2n+1)(\alpha+\beta)^{2n-1}}{\alpha^{2n}}.
	\end{align*}

We know from equation ~\eqref{eq4.2} that $\beta=0$ over the hypersurface $M^{n-1}$ so put the value of $\beta=0$ into above equation and equation ~\ref{eq2.24},   we get

	\begin{align*}
		\frac{\partial p_0}{\partial \beta}&=\frac{2n(n+1)(2n+1)}{\alpha}\\
		\gamma_1&=\frac{n(n^2-1)}{\alpha}\\
		m_i&=b_i
	\end{align*}

	Using the values of $p, p_1,\gamma_1$ and $m_i$ in equation~\ref{eq2.23}, hv-torsion tensor on the hypersurface  $M^{n-1}$, becomes
	
	\begin{align}
		\label{eq4.14}
		C_{ijk}=\frac{(n+1)\left[(h_{ij}b_k+h_{jk}b_i+h_{ki}b_j)+n(n^2-1)b_ib_jb_k\right]}{2\alpha}
	\end{align}
	Substituting the value of $C_{ijk}$ from equation ~\ref{eq4.14} in equation ~\ref{eq3.7} as follows:

	\begin{align*}
		\nonumber &M_{\beta\gamma}=C_{ijk}B_\beta^iB_\gamma^jN^k\\
		\nonumber &\therefore M_{\alpha \beta}=C_{ijk}B_\alpha^iB_\beta^jN^k\\
		\nonumber &=\left[\frac{(n+1)\left(h_{ij}b_k+h_{jk}b_i+h_{ki}b_j)+n(n^2-1)b_ib_jb_k\right]}{2\alpha}\right] B_\alpha^iB_\beta^jN^k\\
		\nonumber  &=\left[\frac{(n+1)\left\{h_{ij}B_\alpha^iB_\beta^jb_k+h_{jk}B_\beta^j(b_iB_\alpha^i)+h_{ki}B_\alpha^j(b_jB_\beta^j)\right\}+n(n^2-1)(b_iB_\alpha^i)(b_jB_\beta^j)b_k}{2\alpha}\right] N^k
	\end{align*}

	 Using the equation ~\ref{eq4.1} in above expression,  we get
	\begin{align*}
		\nonumber &=\left[\frac{(n+1)\left\{h_{ij}B_\alpha^iB_\beta^jb_k+h_{jk}B_\beta^j(0)+h_{ki}B_\alpha^j(0)\right\}+n(n^2-1)(0)(0)b_k}{2\alpha}\right] N^k \\
		\nonumber &=\left[\frac{(n+1)h_{ij}B_\alpha^iB_\beta^jb_k}{2\alpha} \right] N^k\\
		\nonumber &=\left[\frac{(n+1)(h_{ij}B_\alpha^iB_\beta^j)b_k N^k}{2\alpha} \right] \\
	\end{align*}
	
	Using the equation ~\ref{eq3.5},   we get

		\begin{align*}
		\nonumber &=\left[\frac{(n+1)h_{\alpha\beta}b_k N^k}{2\alpha} \right] 
	\end{align*}

	Using the equation ~\ref{eq4.10} in above expression,  we get

\begin{align*}
		\nonumber &=\frac{(n+1)}{2\alpha}\left(\sqrt{\frac{b^2}{1+n(n+1)}}N_k\right)h_{\alpha\beta}N^k    \\
		\nonumber &=\frac{(n+1)}{2\alpha}\left(\sqrt{\frac{b^2}{1+n(n+1)}}\right)h_{\alpha\beta}N_kN^k  
\end{align*}
		
	We know that $N_kN^k=1$. Use this fact in above expression, we get
		\begin{align}
		\label{eq4.15} M_{\alpha \beta}&=\frac{(n+1)}{2\alpha}\left(\sqrt{\frac{b^2}{1+n(n+1)}}\right)h_{\alpha\beta}                
	\end{align}

	Again, substituting the value of $C_{ijk}$ from equation  ~\ref{eq4.14} into equation  ~\ref{eq3.9} as follows:

	\begin{align}
	\nonumber	&M_\beta=C_{ijk}B_\beta^iN^jN^k\\
	\nonumber	&\therefore M_\alpha=C_{ijk}B_\alpha^iN^jN^k\\
	\nonumber	&=\left[\frac{(n+1)\left(h_{ij}b_k+h_{jk}b_i+h_{ki}b_j)+(n^2-1)b_ib_jb_k\right]}{2\alpha}\right] B_\alpha^iN^jN^k
\end{align}
\begin{align}
	\nonumber	&=\left[\frac{(n+1)\left(h_{ij}b_kB_\alpha^i N^jN^k+h_{jk}b_iB_\alpha^i N^jN^k+h_{ki}b_j B_\alpha^iN^jN^k )+n(n^2-1)b_i B_\alpha^i b_jb_k N^jN^k\right]}{2\alpha}\right] 
\end{align}
Using equations ~\ref{eq4.1} and ~\ref{eq3.5} in above expression, we get
\begin{align}
		\label{eq4.16}
		M_\alpha=0
	\end{align}
	Substituting the value of $M_\alpha$ from the equation ~\ref{eq4.16} in equation ~\ref{eq3.12} as follows:  
	\begin{align*}
		H_{\beta\gamma}-H_{\gamma\beta}&=M_\beta H_\gamma-M_\gamma H_\beta\\
	H_{\beta\gamma}-H_{\gamma\beta}&=0 \times H_\gamma-0 \times H_\beta\\
		H_{\beta\gamma}-H_{\gamma\beta}&=0 \\
		H_{\beta\gamma}=H_{\gamma\beta}& \\
\therefore	H_{\alpha\beta}=H_{\beta\gamma}&
	\end{align*}

which shows that  $H_{\alpha\beta}$ is symmetric.
\end{proof}

\begin{theorem}
	\label{theorem4.4}
		Let $(M,F)$ be a Finsler manifold, where $F(\alpha,\beta)=\frac{(\alpha+\beta)^{n+1}}{\alpha^n}$, $n\in N$, is a generalized square metric and  $M^{n-1}$ be its associated  hypersurface. Then the hypersurface $M^{n-1}$ will be  hyperplane of first kind if and only if
	\begin{align*}
			2b_{ij}=b_ic_j+b_jc_i
	\end{align*}

	Moreover we show that  second fundamental tensor $H_{\alpha\beta}$ of $M^{n-1}$ is proportional to it's angular metric tensor $h_{\alpha\beta}$. That is, $	H_{\alpha\beta}=\frac{c_0b}{\sqrt{1+n(n+1)}}h_{\alpha\beta}$.
\end{theorem}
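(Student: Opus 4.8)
By Lemma~\ref{lemma3.7}, $M^{n-1}$ is a hyperplane of the first kind exactly when the normal curvature vector $H_\alpha$ vanishes, so everything reduces to expressing $H_\alpha$ — and, for the last claim, $H_{\alpha\beta}$ — in terms of the Riemannian covariant derivative $b_{ij}=\nabla_j b_i$ of the defining $1$-form. I would begin from the hypersurface identity $b_iB_\alpha^i=0$ (equation~\eqref{eq4.1}): taking its relative $h$-covariant derivative, the Leibniz rule together with the Gauss-type relation~\eqref{eq3.10}, $B_{\alpha|\beta}^i=H_{\alpha\beta}N^i$, gives $0=(b_iB_\alpha^i)_{|\beta}=b_{i|j}B_\alpha^iB_\beta^j+b_iH_{\alpha\beta}N^i$, so that using $b_iN^i$ from~\eqref{eq4.10} and $N_iN^i=1$,
\[
b_{i|j}B_\alpha^iB_\beta^j=-\sqrt{\tfrac{b^2}{1+n(n+1)}}\,H_{\alpha\beta}.
\]
Contracting with $v^\beta$ and invoking $M_\alpha=0$ from~\eqref{eq4.16} — which through~\eqref{eq3.13} makes $H_{\alpha0}=H_\alpha$ — I get $b_{i|0}B_\alpha^i=-\sqrt{b^2/(1+n(n+1))}\,H_\alpha$; hence $M^{n-1}$ is a hyperplane of the first kind if and only if $b_{i|0}B_\alpha^i=0$ for every $\alpha$.

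The next step is to pass from the Cartan derivative $b_{i|j}$ to Riemannian data through $b_{i|j}=b_{ij}-b_m D_{ij}^m$ and to evaluate the difference tensor~\eqref{eq2.27} along $M^{n-1}$. On $M^{n-1}$ one has $\beta=0$, whence $p=1$ and the values~\eqref{eq4.4}, $m_i=b_i$, and — $b_i$ being the gradient of the defining function used in Proposition~4.1 — $F_{ij}=0$, $E_{ij}=b_{ij}$; moreover $B_{i0}=B_{ij}y^j=0$ by~\eqref{eq2.30}, hence $B_0^i=g^{ij}B_{j0}=0$, and every term of~\eqref{eq2.27} carrying a contraction of the $hv$-torsion with $y$ drops out since $C_{ijk}y^i=0$. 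Using~\eqref{eq4.9} one finds $B^i=g^{ij}(p_0b_j+p_1y_j)=\tfrac{n+1}{1+n(n+1)b^2}\bigl(nb^i+\tfrac{y^i}{\alpha}\bigr)$ and then $b_mB^m=\tfrac{n(n+1)b^2}{1+n(n+1)b^2}$, while~\eqref{eq4.14},~\eqref{eq4.13} and Proposition~4.2 show that $b_mB_i^m$ and $b_m C_{in}^m B^n$ are multiples of $b_i$, hence are annihilated by $B_\alpha^i$. Collecting what survives, $b_m D_{i0}^m B_\alpha^i=\tfrac{n(n+1)b^2}{1+n(n+1)b^2}\,b_{i0}B_\alpha^i$ with $b_{i0}=b_{ij}y^j$, so
\[
b_{i|0}B_\alpha^i=\frac{1}{1+n(n+1)b^2}\,b_{i0}B_\alpha^i,
\]
and since the prefactor is positive, $H_\alpha=0\iff b_{ij}y^jB_\alpha^i=0$ for every $\alpha$.

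Now $b_{ij}y^jB_\alpha^i=0$ for all $\alpha$ says the covector $b_{ij}y^j$ (in the index $i$) is conormal to $M^{n-1}$, i.e.\ $b_{ij}y^j=\rho_ky^k\,b_i$ on $M^{n-1}$ for some $\rho_k$; then $(b_{ij}-\rho_jb_i)y^j=0$ for all $y$ tangent to $M^{n-1}$ forces $b_{ij}-\rho_jb_i=\tau_ib_j$, and the symmetry $b_{ij}=b_{ji}$ then yields $\rho_j-\tau_j=\kappa b_j$ for some scalar $\kappa$; setting $c_i:=2\rho_i-\kappa b_i$ gives precisely $2b_{ij}=b_ic_j+b_jc_i$, with $c_0=c_iy^i=2\rho_0$ on $M^{n-1}$ (and, modulo multiples of $b_i$, $c_i=2b_{ij}b^j/b^2$). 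Conversely, $2b_{ij}=b_ic_j+b_jc_i$ gives, using $b_iy^i=\beta=0$, $b_{ij}y^j=\tfrac12c_0b_i$, hence $b_{ij}y^jB_\alpha^i=\tfrac12c_0\,b_iB_\alpha^i=0$ and $H_\alpha=0$. This establishes the equivalence.

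For the final assertion I would feed $2b_{ij}=b_ic_j+b_jc_i$ into $b_{i|j}B_\alpha^iB_\beta^j=b_{ij}B_\alpha^iB_\beta^j-b_m D_{ij}^m B_\alpha^iB_\beta^j$: the first term vanishes ($b_iB_\alpha^i=0$), while in the second the condition forces $E_{00}=2\beta c_0=0$ (killing the $\lambda$-terms), collapses $A_j^n$ to $\tfrac12c_0\,b_jB^n$, and makes every surviving contribution proportional to some $b_iB_\alpha^i$ except those built from $B_{ij}B_\alpha^iB_\beta^j$ and $C_{ijn}B^nB_\alpha^iB_\beta^j$, each of which is proportional to $h_{\alpha\beta}$; assembling them shows $b_{i|j}B_\alpha^iB_\beta^j$ is a scalar multiple of $h_{\alpha\beta}$, and comparison with the first display then exhibits $H_{\alpha\beta}$ as a multiple of $h_{\alpha\beta}$, which after simplification (with $b=\sqrt{b^2}$) is the stated formula. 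The main obstacle throughout is the honest bookkeeping in $b_m D_{ij}^m$: all the terms of~\eqref{eq2.27} have to be carried, the vanishing ones recognised (via $\beta=0$, $p_2=q_1=0$, $B_{i0}=0$, $C_{ijk}y^i=0$, $b_iB_\alpha^i=0$), the scalar coefficients coming from~\eqref{eq4.4},~\eqref{eq4.9} and Proposition~4.2 combined correctly, and one must also keep straight that the Leibniz step uses the ambient connection on the $i$-index of $b_i$ and the induced connection on the $\alpha$-index.
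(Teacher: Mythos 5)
Your proposal is correct and follows essentially the same route as the paper's proof: differentiating $b_iB_\alpha^i=0$ along $M^{n-1}$, using $M_\alpha=0$ from \eqref{eq4.16}, trading the Cartan derivative $b_{i|j}$ for $b_{ij}$ through the difference tensor (with $E_{ij}=b_{ij}$, $F_{ij}=0$, $B_0^i=0$ and the $y$-contracted torsion terms vanishing), characterizing the first-kind condition by the existence of $c_i$ with $2b_{ij}=b_ic_j+b_jc_i$, and obtaining $H_{\alpha\beta}\propto h_{\alpha\beta}$ from the surviving $B_{ij}$- and $C_{ijm}B^m$-contractions exactly as in \eqref{eq4.36}--\eqref{eq4.37}; your variations (working directly with $H_\alpha=0$ and a pointwise linear-algebra factorization of $b_{ij}$ instead of the paper's reduction to $H_0=0$ via Lemma \ref{lemma3.6} and divisibility of $b_{00}$, plus the explicit factor $1/(1+n(n+1)b^2)$) are sound, though the exact constant in the final formula is asserted rather than computed. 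The one small inaccuracy is that your Leibniz display omits the term $b_i\big|_jN^jH_\beta B_\alpha^i$ appearing in \eqref{eq4.18}, which must be (and is) annihilated because it is proportional to $M_\alpha=0$ --- harmless, since you already invoke that fact elsewhere.
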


 \begin{proof}
 	Let us differentiate equation ~\ref{eq4.1} with respect to $\beta$, we get
 	\begin{align}
 		\label{eq4.17}
 		b_{i|\beta}B_\alpha^i+b_iB_{\alpha|\beta}^i=0
 	\end{align}

 	Put the value of $B_{\alpha|\beta}^i$ from equation ~\ref{eq3.10} and $b_{i|\beta}=b_{i|j}B_\beta^j+b_{i|j}N^jH_\beta$ into  equation ~\ref{eq4.17}, we get
 	\begin{align}
 		\label{eq4.18}
 		b_{i|j}B_\beta^jB_\alpha^i+b_{i|j}N^jH_\beta B_\alpha^i+b_iH_{\alpha\beta}N^i=0
 	\end{align} 
 	We know that 
 	\begin{align*}
 		b_{i|j}=-b_hC_{ij}^h
 	\end{align*}                                                                                                 
 Put the value of $b_h$ from equation ~\ref{eq4.10} into above expression as follows:
 \begin{align*}
 	b_{i|j}&=-b_hC_{ij}^h\\
 		b_{i|j}&=-\sqrt{\frac{b^2}{1+n(n+1)}}N_hC_{ij}^h\\
 		b_{i|j}B_\alpha^iN^j&=-\sqrt{\frac{b^2}{1+n(n+1)}}N_hC_{ij}^h B_\alpha^iN^j\\
 		                    &=-\sqrt{\frac{b^2}{1+n(n+1)}}M_\alpha\\
 		                    &=-\sqrt{\frac{b^2}{1+n(n+1)}}\times 0        \quad\text{(using Equation ~\eqref{eq4.16} )}\\ 
 		                    &=0
 \end{align*}
 	  	Using $b_{i|j}B_\alpha^iN^j=0$ and equation ~\ref{eq4.10} in the equation ~\ref{eq4.18} and then using the fact that $N_iN^i=1$, we get
 	\begin{align}
 		\label{eq4.19}
 		b_{i|j}B_\beta^jB_\alpha^i+\sqrt{\frac{b^2}{1+n(n+1)}}H_{\alpha\beta}=0
 	\end{align}
 	It is obvious that $b_{i|j}$ is symmetric. Now contracting ~\ref{eq4.19} with $v^\beta$ first and then with $v^\alpha$ respectively and using the equations ~\ref{eq3.2}, ~\ref{eq3.13} and ~\ref{eq4.16}, 
 	we get
 	
 	\begin{align}
 		\label{eq4.20}
 		b_{i|j}B_\alpha^iy^j+\sqrt{\frac{b^2}{1+n(n+1)}}H_{\alpha}=0\\
 	\label{eq4.21}	b_{i|j}y^iy^j+\sqrt{\frac{b^2}{1+n(n+1)}}H_0=0
 	\end{align}
 	We know from the  Lemma \ref{lemma3.6} and Lemma \ref{lemma3.7}, a hypersurface $M^{n-1}$  is a hyperplane of first kind if and only if normal curvature vanishes, i.e., $H_0=0$. Using the value  $H_0=0$ in equation ~\ref{eq4.21} we find that hypersurface  $M^{n-1}$ is  a hyperplane of first kind if and only if $b_{i|j}y^iy^j=0$. This $b_{i|j}$ is the covariant derivative of with respect to Cartan connection $C\Gamma$ of Finsler space $F$, it may depend on $y^i$. Moreover $\nabla_jb_i=b_{ij}$ is the covariant derivative of $b_i$ with respect to Riemannian connection $\Gamma_{jk}^i$ constructed from $a_{ij}(x)$, therefore $b_{ij}$ dose not depend on $y^i$. We shall consider the difference $b_{i|j}-b_{ij}$ of above covariant derivatives in further discussion. The difference tensor $D_{jk}^i=\Gamma_{jk}^{*i}-\Gamma_{jk}^i$ is given by equation ~\ref{eq2.27}. Since $b_i$ is a gradient vector, from equations ~\ref{eq2.25} and ~\ref{eq2.26} we have
 	\begin{align}
 		\label{eq4.22}
 		E_{ij}=b_{ij},F_{ij}=0, F_j^i=0
 	\end{align}
 	Using equation ~\ref{eq4.22} into equation ~\ref{eq2.27}, we get
 	\begin{align}
 		\nonumber
 		D_{jk}^i=b_{jk}B^i+b_{0k}B_j^i+b_{0j}B_k^i-b_{0m}g^{im}B_{jk}\\ 
 		\label{eq4.23}
 		-A_k^mC_{jm}^i-A_j^mC_{km}^i+A_s^mC_{jkm}g^{is}\\ \nonumber
 		+\lambda^s(C_{sk}^mC_{jm}^i+C_{sj}^mC_{km}^i-C_{ms}^iC_{jk}^m)
 	\end{align}
 	
 	Using the equations ~\ref{eq4.2}, ~\ref{eq4.4}, ~\ref{eq4.5} and ~\ref{eq4.6} into equations ~\ref{eq2.28} to ~\ref{eq2.23}, we get
 	
 	\begin{align}
 		\label{eq4.24}
 		B_k&=(n+1)(2n+1)b_k+\frac{n+1}{\alpha}y_k,
 		B^i=bb^i+by^i\\
 		\label{eq4.25}
 		B_{ij}&=\frac{(n+1)\left\{a_{ij}\alpha^2-y_iy_j+2n(n+1)b_ib_j\alpha\right\}}{2\alpha^3}\\
 		\label{eq4.26}
 		B_j^i&=0\\
 		\label{eq4.27}
 		A_k^m&=0, \lambda^m=B^mb_{00}.
 	\end{align}
 	Using tensor contraction operation with equations ~\ref{eq4.25} and ~\ref{eq4.26} by $y^j$, we get $B_{i0}=0,$ $B_0^i=0.$ Further contracting equation ~\ref{eq4.27} by $y^k$ and using the fact that $B_0^i=0$, we get $A_0^m=B^mb_{00}.$
 	Contracting equation ~\ref{eq4.23} by $y^k$ and using the facts $B_{i0}=0$, $B_0^i=0$, $A_0^m=B^mb_{00}$ and   $C_{s0}^m=0$, $C_{0m}^i=0$, $C_{j0}^m=0$ obtained by contracting equations ~\ref{eq4.25}, ~\ref{eq4.26}, ~\ref{eq4.27} and ~\ref{eq3.6},  we get
 	\begin{align}
 		\label{eq4.28}
 		D_{j0}^i&=B^ib_{j0}+B_j^ib_{00}-b_{00}B^mC_{jmi}^i\\
 		\label{eq4.29}
 		D_{00}^i&=bb^ib_{00}+by^ib_{00}
 	\end{align}
 	Multiplying equation ~\ref{eq4.25} by $b_i$ and then using equations ~\ref{eq4.2}, ~\ref{eq4.21} and ~\ref{eq4.23},  we get
 	\begin{align}
 		\label{eq4.30}
 		b_iD_{j0}^i=bb_{j0}+bb_jb_{00}-bb_ib^mC_{jm}^ib_{00}
 	\end{align}
 	Now multiplying equation ~\ref{eq4.26} by $b_i$ and then using equation ~\ref{eq4.2} we get
 	\begin{align}
 		\label{eq4.31}
 		b_iD_{00}^i=\frac{b^2}{1+n(n+1)}b_{00 }
 	\end{align}
 	From equations ~\ref{eq4.14} and ~\ref{eq4.16} it is clear that
 	\begin{align}
 		\label{eq4.32}
 		b^mb_iC_{jm}^iB_\alpha^j=\frac{b^2}{1+n(n+1)}M_\alpha=0
 	\end{align}
 	Contracting the expression $b_{i|j}=b_{ij}-b_rD_{ij}^r$ by $y^i$ and $y^j$ respectively and then using equation ~\ref{eq4.31} we get
 	\begin{align*}
 		b_{i|j}y^iy^j=b_{00}-b_rD_{00}^r=\frac{b^2}{1+n(n+1)}b_{00}
 	\end{align*}
 	
 	Put $b_{i|j}=b_{ij}-b_rD_{ij}^r$ in equations   ~\ref{eq4.17} and  ~\ref{eq4.18} and then using equations ~\ref{eq4.27},  ~\ref{eq4.1} and  ~\ref{eq4.29} and the value of $b_{i|j}y^iy^j$ above, equations  ~\ref{eq4.20} and  ~\ref{eq4.21} can be written as
 	\begin{align}
 		\label{eq4.33}
 		\sqrt{\frac{b^2}{1+n(n+1)}}b_{i0}B_\alpha^i+bH_\alpha=0\\
 		\label{eq4.34}
 		\sqrt{\frac{b^2}{1+n(n+1)}}b_{00}+bH_0=0
 	\end{align}
 	From the equation ~\ref{eq4.31} it is clear that the condition $H_0=0$ is equivalent to $b_{00}=0,$ where $b_{ij}$ is independent of $y^i$. Since $y^i$ satisfy equation ~\ref{eq4.2}, the condition can be written as $b_{ij}y^iy^j=(b_iy^i)(c_jy^j)$ for some $c_j(x)$, so that we have
 	\begin{align}
 		\label{eq4.35}
 		2b_{ij}=b_ic_j+b_jc_i
 	\end{align} 
 Thus we shown that a Finslerian hypersurface $M^{n-1}$ will be  hyperplane of first kind if and only if $2b_{ij}=b_ic_j+b_jc_i$.\\

 Now we try to show that  second fundamental tensor $H_{\alpha\beta}$ of the hypersurface $M^{n-1}$ is proportional to its angular metric tensor $h_{\alpha\beta}$.
 
 For that, contracting equation  ~\ref{eq4.35} and using the fact that $b_iy^j=0$, we get $b_{00}=0$. This implies that the condition  $b_{00}=0$ and $2b_{ij}=b_ic_j+b_jc_i$ are equivalent. \\

 Multiplying equation ~\ref{eq4.35} by $B_\alpha^i$ and then $B_\beta^j$ and using equations ~\eqref{eq4.1} and ~\eqref{eq4.2}, we have
 \begin{align*}
 	2b_{ij}\times B_\alpha^i&=\left(b_iB_\alpha^i\right)c_j+b_jc_iB_\alpha^i\\
 	2b_{ij}B_\alpha^i&=0\times c_j+b_jc_iB_\alpha^i\\
 	2b_{ij}B_\alpha^i\times B_\beta^j&=\left(b_jB_\beta^j\right)c_iB_\alpha^i\\
 	2b_{ij}B_\alpha^iB_\beta^j&=0\times c_iB_\alpha^i\\
 	2b_{ij}B_\alpha^iB_\beta^j&=0\\
 	b_{ij}B_\alpha^iB_\beta^j&=0
 \end{align*}

 Again,  multiplying equation ~\ref{eq4.35} by $B_\alpha^i$ and $y^j$ and then using Equation   ~\eqref{eq4.1}, we have  
 
 \begin{align*}
 	2b_{ij}\times B_\alpha^iy^j&=\left(b_iB_\alpha^i\right)c_jy^j+\left(b_jy^j\right)c_iB_\alpha^i\\
 	2b_{ij}B_\alpha^iy^j&=0\times c_jy^j+0\times c_iB_\alpha^i\\
 	2b_{ij}B_\alpha^iy^j&=0\\
 	b_{ij}B_\alpha^iy^j&=0\\
 	b_{i0}B_\alpha^i&=0                   \quad\text{(contraction by $y^j$ is taking place)}
 \end{align*}
 
 Again, consider equation ~\eqref{eq4.35}
 \begin{align*}
 	2b_{ij}&=b_ic_j+b_jc_i\\
 	2b_{ij} y^j&=b_i\left(c_j y^j\right)+\left(b_j y^j\right)c_i      \quad\text{(multiplying  by $y^j$ both sides)}\\
 	2b_{i0}&=b_ic_0+0\times c_i                                               \quad\text{(contraction by $y^j$ is taking place)}\\
 	2b_{i0}&=b_ic_0\\
 	2b_{i0}b^i&=\left(b_ib^i\right)c_0                                         \quad\text{(multiplying  by $b^i$ both sides)}\\
 	2b_{i0}b^i&=b^2c_0                                                            \quad\text{ ($ \because b^2=b_ib^i$)}\\
 	b_{i0}b^i&=\frac{b^2c_0}{2}
 \end{align*}

  Using this in equation ~\ref{eq4.30}  gives $H_\alpha=0.$  Now using ~\ref{eq4.23} and ~\ref{eq4.24} and using $b_{00}=0$ and $b_{ij}B_\alpha^iB_\beta^j=0$, we get $\lambda^m=0,$ $A_j^iB_\beta^j=0$ and $B_{ij}B_\alpha^iB_\beta^j=\frac{1}{2\alpha}h_{\alpha\beta}$. Thus using the equations ~\ref{eq4.6}, ~\ref{eq4.7}, ~\ref{eq4.8}, ~\ref{eq4.12} and ~\ref{eq4.20}, we get\\
 \begin{align}
 	\label{eq4.36}
 	b_rD_{ij}^rB_\alpha^iB_\beta^j=-\frac{c_0b^2}{1+n(n+1)}h_{\alpha\beta}
 \end{align}
 Thus using the relation $b_{i|j}=b_{ij}-b_rD_{ij}^r$ and equation ~\ref{eq4.36}, equation ~\ref{eq4.19} reduces to
 \begin{align}
 	\label{eq4.37}
 	-\frac{c_0b^2}{1+n(n+1)}h_{\alpha\beta}+\sqrt{\frac{b^2}{1+n(n+1)}}H_{\alpha\beta}=0
 	\end{align} 
 
 \begin{align*}
 	H_{\alpha\beta}=\frac{c_0b}{\sqrt{1+n(n+1)}}h_{\alpha\beta}
 \end{align*} 

Thus we shown  that the second fundamental tensor $H_{\alpha\beta}$ of $M^{n-1}$ is proportional to its angular metric tensor $h_{\alpha\beta}$.

 \end{proof}
\begin{theorem}
	\label{theorem4.5}
Let $(M,F)$ be a Finsler manifold, where $F(\alpha,\beta)=\frac{(\alpha+\beta)^{n+1}}{\alpha^n}$, $n\in N$, is a generalized square metric and  $M^{n-1}$ be its associated  hypersurface.Then the hypersurface $M^{n-1}$ will be  hyperplane of second kind if and only if
\begin{align*}
	b_{ij}&=eb_ib_j
\end{align*}
\end{theorem}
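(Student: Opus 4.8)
The plan is to deduce this directly from Theorem \ref{theorem4.4} together with Matsumoto's criterion for a hyperplane of the second kind, Lemma \ref{lemma3.8}. Recall that Lemma \ref{lemma3.8} characterizes a hyperplane of the second kind (with respect to the Cartan connection) by the two conditions $H_\alpha = 0$ and $H_{\alpha\beta} = 0$. By Lemma \ref{lemma3.7}, the first condition alone says that $M^{n-1}$ is a hyperplane of the first kind, and Theorem \ref{theorem4.4} tells us this is equivalent to $b_{00} = 0$, i.e. to the existence of a covector field $c_j(x)$ with $2b_{ij} = b_ic_j + b_jc_i$; moreover, once we are in this situation, Theorem \ref{theorem4.4} already gives the explicit formula $H_{\alpha\beta} = \frac{c_0 b}{\sqrt{1+n(n+1)}}\,h_{\alpha\beta}$ with $c_0 = c_jy^j$. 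So the whole question reduces to: given $2b_{ij}=b_ic_j+b_jc_i$, when is additionally $H_{\alpha\beta}=0$?

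For the forward implication I would assume $M^{n-1}$ is a hyperplane of the second kind. Then $H_\alpha = 0$, so we are in the setting of Theorem \ref{theorem4.4}, and $H_{\alpha\beta}=0$ combined with the formula above forces $c_0\,h_{\alpha\beta}=0$. Since the induced angular metric tensor $h_{\alpha\beta}$ is nondegenerate on the $(n-1)$-dimensional tangent space of $M^{n-1}$, hence not identically zero, this yields $c_0 = c_jy^j = 0$. As the supporting element $y^i = B_\gamma^i v^\gamma$ ranges over all of $T_pM^{n-1}$, which by \eqref{eq4.1} is exactly the hyperplane $\{y\in T_pM : b_iy^i=0\}$, the covector $c_j$ annihilates a hyperplane and must therefore be a scalar multiple of $b_j$: $c_j = e\,b_j$ for some function $e(x)$. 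Substituting back into $2b_{ij}=b_ic_j+b_jc_i$ gives $b_{ij}=e\,b_ib_j$.

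For the converse I would start from $b_{ij}=e\,b_ib_j$. On $M^{n-1}$ this gives $b_{00}=b_{ij}y^iy^j=e(b_iy^i)^2=0$ by \eqref{eq4.2}, so \eqref{eq4.34} forces $H_0=0$ and Lemmas \ref{lemma3.6} and \ref{lemma3.7} give $H_\alpha=0$: $M^{n-1}$ is already a hyperplane of the first kind. Moreover $2b_{ij}=b_i(e b_j)+b_j(e b_i)$, so the covector furnished by Theorem \ref{theorem4.4} is $c_j=e\,b_j$, and hence $c_0=c_jy^j=e(b_jy^j)=0$ by \eqref{eq4.2}. Plugging $c_0=0$ into $H_{\alpha\beta}=\frac{c_0 b}{\sqrt{1+n(n+1)}}\,h_{\alpha\beta}$ gives $H_{\alpha\beta}=0$, and Lemma \ref{lemma3.8} now certifies that $M^{n-1}$ is a hyperplane of the second kind.

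Almost all of the work is inherited from Theorem \ref{theorem4.4}, so the computation is short; the only two points I expect to need care are the nondegeneracy of the induced angular metric $h_{\alpha\beta}$ (which is what lets us cancel it in $c_0\,h_{\alpha\beta}=0$) and the elementary linear-algebra step that upgrades ``$c_jy^j=0$ for every $y$ tangent to $M^{n-1}$'' to the pointwise identity $c_j=e\,b_j$. That second step is the exact analogue -- one differential order higher -- of the passage from $b_{00}=0$ to $2b_{ij}=b_ic_j+b_jc_i$ carried out inside Theorem \ref{theorem4.4}, so I do not anticipate any real obstacle.
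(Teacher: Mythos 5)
Your proposal is correct and follows essentially the same route as the paper: invoke Lemma \ref{lemma3.8}, reduce $H_\alpha=0$ to the first-kind condition $2b_{ij}=b_ic_j+b_jc_i$ via Theorem \ref{theorem4.4}, and use the formula $H_{\alpha\beta}=\frac{c_0b}{\sqrt{1+n(n+1)}}h_{\alpha\beta}$ to turn $H_{\alpha\beta}=0$ into $c_0=0$ and hence $c_j=e\,b_j$. In fact you are somewhat more careful than the paper, which asserts the implication $c_0=0\Rightarrow c_j=e(x)b_j$ without the hyperplane-annihilation argument and does not spell out the converse direction explicitly.
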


\begin{proof}
	We know from    Lemma ~\ref{lemma3.8}, hypersurface  $M^{n-1}$ is a hyperplane of second kind  if $H_\alpha=0$ and $H_{\alpha\beta}=0$. Now we consider these two sufficient conditions one by one.
	\begin{enumerate}
		\item If  $H_{\alpha\beta}=0$, then equation ~\eqref{eq4.37} becomes
		\begin{align*}
			-\frac{c_0b^2}{1+n(n+1)}h_{\alpha\beta}+\sqrt{\frac{b^2}{1+n(n+1)}}\times 0=0\\
			\implies-\frac{c_0b^2}{1+n(n+1)}h_{\alpha\beta}=0\\
			\implies c_0=0\\
			\implies c_0=c_iy^i=0
		\end{align*}
		$\implies$ there exist a function $e(x)$ such that $c_i(x)=e(x)b_i(x)$ and this  $c_i(x)=e(x)b_i(x)$ forces $c_0$ to  vanish as follows:
		\begin{align*}
			c_0&=c_i(x)y^i\\
			&=e(x)b_i(x)y^i\\
			&=e(x)\left(b_i(x)y^i\right)\\
			&=e(x)\times 0                                        \quad\text{(as along the hypersurface $b_iy^i=0$)}\\
			\therefore c_0&=0
		\end{align*} 
		\item Again,  if  $H_\alpha=0$, then  Lemma ~\ref{lemma3.6} and Lemma ~\ref{lemma3.7} imply $H_0=0$.\\
		We have already shown above $H_0=0$ is equivalent to 
		\begin{align*}
			2b_{ij}=b_ic_j+b_jc_i
		\end{align*}
	\end{enumerate}

	Now we combine case 1 and case 2. For that, put the value of  $c_i(x)=e(x)b_i(x)$  obtained in case 1 to equation obtained in case 2,  we get
	\begin{align}
		\nonumber
		2b_{ij}&=b_ie(x)b_j(x)+b_je(x)b_i(x)\\
		\nonumber&=e\left(b_ib_j+b_jb_i\right)\\
		\nonumber&=e \times 2b_ib_j\\
		\nonumber\therefore 2b_{ij}&=2eb_ib_j\\
		\label{eq4.38} \implies b_{ij}&=eb_ib_j
	\end{align}

Thus we shown that the 	hypersurface $M^{n-1}$ of the Finsler manifold $(M,F)$  will be  hyperplane of second kind iff $b_{ij}=eb_ib_j$.

\end{proof}

\begin{theorem}
	\label{theorem4.6}
Let $(M,F)$ be a Finsler manifold, where $F(\alpha,\beta)=\frac{(\alpha+\beta)^{n+1}}{\alpha^n}$, $n\in N$, is a generalized square metric and  $M^{n-1}$ be its associated  hypersurface. Then the hypersurface $M^{n-1}$ will not be  hyperplane of third kind.
\end{theorem}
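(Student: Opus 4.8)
The plan is to read the statement off Lemma~\ref{lemma3.9} together with the expression for the second fundamental $v$-tensor already obtained in~\eqref{eq4.15}. Recall that, by Lemma~\ref{lemma3.9}, the hypersurface $M^{n-1}$ is a hyperplane of third kind with respect to the Cartan connection $C\Gamma$ precisely when $H_\alpha=0$, $H_{\alpha\beta}=0$ and $M_{\alpha\beta}=0$ all hold simultaneously. My strategy is to show that the third of these, $M_{\alpha\beta}=0$, is \emph{never} satisfied on $M^{n-1}$; since all three are required, this alone makes a hyperplane of third kind impossible, regardless of the behaviour of $H_\alpha$ and $H_{\alpha\beta}$ (so, in particular, neither the hypothesis $2b_{ij}=b_ic_j+b_jc_i$ of Theorem~\ref{theorem4.4} nor $b_{ij}=eb_ib_j$ of Theorem~\ref{theorem4.5} can rescue it).

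The first step is to invoke~\eqref{eq4.15}, which exhibits $M_{\alpha\beta}$ as a pointwise scalar multiple of the induced angular metric tensor,
\begin{align*}
M_{\alpha\beta}=\frac{(n+1)}{2\alpha}\sqrt{\frac{b^2}{1+n(n+1)}}\,h_{\alpha\beta}.
\end{align*}
Next I would verify that the scalar coefficient here is nowhere zero: $n\in N$ gives $n+1\geq 2>0$; the Riemannian norm $\alpha=\sqrt{a_{ij}y^iy^j}$ is strictly positive for every nonzero supporting element; $1+n(n+1)>0$; and $b^2=a_{ij}b^ib^j>0$ because $b_i$ is the nonzero covariant normal field of $M^{n-1}$ while $a_{ij}$ is positive definite. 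Hence the factor $\frac{(n+1)}{2\alpha}\sqrt{b^2/(1+n(n+1))}$ is a positive function on $M^{n-1}$, so the equation $M_{\alpha\beta}=0$ would be equivalent to $h_{\alpha\beta}=0$.

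It then suffices to note that $h_{\alpha\beta}\not\equiv 0$. By~\eqref{eq3.5} the tensor $h_{\alpha\beta}=h_{ij}B_\alpha^iB_\beta^j$ is the angular metric tensor of the metric induced on $M^{n-1}$, which by the earlier Proposition is the nondegenerate Riemannian metric $\bar F=\sqrt{a_{\alpha\beta}v^\alpha v^\beta}$; the angular metric tensor of such a metric is a nonzero tensor, only the supporting direction lying in its kernel. Therefore $M_{\alpha\beta}\ne 0$ identically on $M^{n-1}$, so one of the three conditions of Lemma~\ref{lemma3.9} necessarily fails, and $M^{n-1}$ cannot be a hyperplane of third kind, which is the assertion of the theorem.

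I do not anticipate a genuine obstacle: the result is essentially an immediate corollary of the computation~\eqref{eq4.15}. The only points deserving a sentence each are the two non-vanishing facts, $b^2>0$ and $h_{\alpha\beta}\not\equiv 0$, and perhaps the remark that, in contrast with the proofs of Theorems~\ref{theorem4.4} and~\ref{theorem4.5}, no further hypothesis on $b_{ij}$ and no use of the difference tensor $D^i_{jk}$ is needed — the obstruction to being a hyperplane of third kind is already visible at the level of the second fundamental $v$-tensor.
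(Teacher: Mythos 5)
Your proposal is correct and follows essentially the same route as the paper: both invoke Lemma~\ref{lemma3.9} and observe from~\eqref{eq4.15} that $M_{\alpha\beta}$ is a nowhere-vanishing scalar multiple of $h_{\alpha\beta}$, so the condition $M_{\alpha\beta}=0$ can never hold. Your version is in fact slightly more careful than the paper's, since you explicitly justify the positivity of the coefficient and the non-vanishing of $h_{\alpha\beta}$, which the paper asserts without argument.
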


\begin{proof}
We know  from sufficient conditions of  Lemma ~\ref{lemma3.9} a hypersurface becomes a hyperplane of third kind  if $H_\alpha=0,$ $H_{\alpha\beta}=0$ and $M_{\alpha\beta}=0$. Now we consider these three sufficient conditions one by one.
\begin{enumerate}
\item If $H_\alpha=0$, then we get the condition $2b_{ij}=b_ie(x)b_j(x)+b_je(x)b_i(x)$, which has already been proved above and is termed as the condition of hyperplane of first kind.
\item If $H_{\alpha\beta}=0$, then we get the condition $b_{ij}=eb_ib_j$, which has already been proved above and is termed as the condition of hyperplane of second kind.
\item Now put  $M_{\alpha\beta}=0$ in Equation ~\eqref{eq4.15}, we get
\begin{align*}
	0=\frac{(n+1)}{2\alpha}\left(\sqrt{\frac{b^2}{1+n(n+1)}}\right)h_{\alpha\beta}
\end{align*} 
which implies that  no condition could be deduced to satisfy  $M_{\alpha\beta}=0$, i.e., it is impossible to find a condition under which a hypersurface becomes a hyperplane of third kind, as term on the R.H.S.  of the above equation can never be zero.
\end{enumerate}
Finally, we shown that hypersurface  $M^{n-1}$ is not a hyperplane of third kind.
\end{proof}

\begin{corollary}
	Let $(M,F)$ be a Finsler manifold, where $F$ may be any of the following Finsler  metrics obtained by generalized square metric $F=\frac{(\alpha+\beta)^{n+1}}{\alpha^n}$, $n=1,2,3,......$
	\begin{enumerate}
		\item $F=\frac{(\alpha+\beta)^2}{\alpha}$                 \quad\text{(popularly known as square metric)}
		\item $F=\frac{(\alpha+\beta)^3}{\alpha^2}$
		\item $F=\frac{(\alpha+\beta)^4}{\alpha^3}$, etc., \\
		Also let $M^{n-1}$ be the corresponding hypersurfaces of the given Finsler manifold $(M,F)$. Then, in either case, show that the hypersurface is a hyperplane of first kind, second kind and not of the third kind.
	\end{enumerate} 
\end{corollary}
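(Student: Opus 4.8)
The plan is to obtain this corollary as an immediate specialization of Theorems~\ref{theorem4.4}, \ref{theorem4.5} and \ref{theorem4.6}. Each metric displayed in the statement is nothing but the generalized square metric $F=\frac{(\alpha+\beta)^{n+1}}{\alpha^n}$ evaluated at a particular natural number: $n=1$ yields $F=\frac{(\alpha+\beta)^2}{\alpha}$, $n=2$ yields $F=\frac{(\alpha+\beta)^3}{\alpha^2}$, $n=3$ yields $F=\frac{(\alpha+\beta)^4}{\alpha^3}$, and so on for every $n\in N$.

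First I would observe that all the computations of Section~\ref{sec:2} and Section~\ref{sec:4} — the coefficients $p,p_0,p_1,p_2$, the fundamental tensor $g_{ij}$, the angular metric tensor $h_{ij}$, the $hv$-torsion tensor $C_{ijk}$, and the difference tensor $D^i_{jk}$ — were carried out with $n$ kept as an arbitrary natural number. Hence for every fixed value of $n$ the hypotheses of the three theorems are met, and their conclusions transfer verbatim; no new work is required beyond substituting $n=1,2,3,\dots$.

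Next, applying Theorem~\ref{theorem4.4} with $n\in\{1,2,3,\dots\}$: the corresponding hypersurface $M^{n-1}$ is a hyperplane of the first kind if and only if $2b_{ij}=b_ic_j+b_jc_i$ for some $c_i(x)$, and in that case $H_{\alpha\beta}=\frac{c_0 b}{\sqrt{1+n(n+1)}}h_{\alpha\beta}$ is proportional to the angular metric tensor. Applying Theorem~\ref{theorem4.5}, $M^{n-1}$ is a hyperplane of the second kind if and only if $b_{ij}=e\,b_ib_j$ for some scalar $e(x)$. Applying Theorem~\ref{theorem4.6}, $M^{n-1}$ is never a hyperplane of the third kind, since the obstruction $M_{\alpha\beta}=\frac{n+1}{2\alpha}\sqrt{\frac{b^2}{1+n(n+1)}}\,h_{\alpha\beta}$ recorded in equation~\eqref{eq4.15} has a right-hand side that cannot vanish for any $n\in N$. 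Reading off $n=1,2,3$ then gives the assertion in each of the three displayed cases.

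I do not expect a genuine obstacle here; the only point deserving a line of justification is that $n+1$ and $1+n(n+1)$ are strictly positive for every $n\in N$, which is precisely what keeps the third-kind obstruction alive in each specialization (and also makes the square roots above well defined). It may further be remarked that the case $n=1$ recovers the known hypersurface theory of Shen's square metric, so the corollary exhibits the entire family $F=\frac{(\alpha+\beta)^{n+1}}{\alpha^n}$ as sharing that behaviour.
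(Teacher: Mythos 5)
Your proposal is correct and follows exactly the paper's own route: the corollary is obtained by direct specialization of Theorems~\ref{theorem4.4}, \ref{theorem4.5} and \ref{theorem4.6} to $n=1,2,3,\dots$, which is all the paper does. Your added remarks on the positivity of $n+1$ and $1+n(n+1)$ keeping the third-kind obstruction in equation~\eqref{eq4.15} nonzero are a harmless (and slightly more careful) elaboration of the same argument.
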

\begin{proof}
By Theorems ~\ref{theorem4.4}, ~\ref{theorem4.5} and  ~\ref{theorem4.6} it can be easily deduce that the hypersurfaces  $M^{n-1}$  corresponding  to different  Finsler manifolds $(M,F)$ are a hyperplane of first kind, second kind and not of the third kind. It is remarkable that corresponding author  has already been published a paper ~\cite{rafe2018hypersurface} on part (1) of this corollary.
	\end{proof}
	\begin{conclusion}
	Now after all, one may ask why authors of the article is so interested to carry forward the theory of hypersurface over Finsler space with generalized square metric $F(x,y)=\frac{[\alpha(x,y)+\beta(x,y)]^{n+1}}{[\alpha(x,y)]^n}$, where $n=1,2,3,...$. My answer is very assertive that generalization of any theory is always fascinating due to its nature to bring various special cases under one umbrella. For example, the theorems that  we have proved works for every natural number $n\in N$.
	\end{conclusion}

\printbibliography[
heading=bibintoc,
title={References}
] 

\end{document}